\documentclass[11pt,pdftex]{article}
\pdfoutput=1
\usepackage{url}
\usepackage{graphics}
\usepackage{graphicx}
\usepackage{color}
\usepackage{amsmath}
\usepackage{amsfonts}
\usepackage{amsthm,amssymb}
\usepackage{tikz,color}
\usepackage[abbrev]{amsrefs}
\usepackage{mdwlist}
\usepackage{lineno}

\def\nmodk#1#2{{#1\,\text{\rm mod}\,#2}}

\textwidth = 6 in
\textheight = 8.3 in
\oddsidemargin = 0.5 in
\evensidemargin = 0.5 in
\topmargin = 0.0 in
\headheight = 0.0 in
\headsep = 0.0 in
\parskip = 0.1in
\parindent = 0.0in
\footskip=55pt

\newcommand{\Cr}{{\hbox{\rm cr}}}
\newcommand{\ucr}{{\hbox{\rm cr}}}

\newcommand\floor[1]{{\lfloor{#1}\rfloor}}
\newcommand\ceil[1]{{\lceil{#1}\rceil}}

\newcommand\biggfloor[1]{{\biggl\lfloor{#1}\biggr\rfloor}}
\newcommand{\bigfloor}[1]{{\bigl\lfloor{#1}\bigr\rfloor}}

\def\Auxone{{\floor{ (k+1)^2/4 }}}

\def\auxone{{\bigfloor{\frac{(k+1)^2}{4}}}}



\newcommand{\beq}{\begin{equation}}
\newcommand{\eeq}{\end{equation}}
\newcommand{\beann}{\begin{eqnarray*}}
\newcommand{\eeann}{\end{eqnarray*}}
\newcommand{\bc}{\begin{center}}
\newcommand{\ec}{\end{center}}

\newtheorem{theorem}{Theorem}
\newtheorem{observation}[theorem]{Observation}
\newtheorem{lemma}[theorem]{Lemma}

\newtheorem{corollary}[theorem]{Corollary}
\newtheorem{proposition}[theorem]{Proposition}

\title{Book drawings of complete bipartite graphs}

\author{Etienne de Klerk\thanks{School of Physical \& Mathematical Sciences, Nanyang Technological University,
             Singapore, 
and Department of Econometrics and Operations Research, Tilburg University, The Netherlands.}
  \and
Dmitrii V.~Pasechnik\thanks{School of Physical \& Mathematical Sciences, Nanyang Technological University,
             Singapore.}
             \and
             Gelasio Salazar\thanks{Instituto de Fisica,
Universidad Autonoma de San Luis Potosi,
San Luis Potosi, SLP Mexico 78000. Supported by CONACYT Grant 106432.
}}

\begin{document}

\linenumbers

\maketitle

\begin{abstract}
We recall that a {\em book with $k$ pages} consists of a straight line
(the {\em spine}) and $k$ half-planes (the {\em pages}), such that the
boundary of each page is the spine. If a graph is drawn on a book with
$k$ pages in such a way that the vertices lie on the spine, and each
edge is contained in a page, the result is a {\em k-page book drawing}
(or simply a {\em $k$-page drawing}). The {\em pagenumber} of a graph
$G$ is the minimum $k$ such that $G$ admits a $k$-page embedding (that
is, a $k$-page drawing with no edge crossings).  The $k$-{\em page
  crossing number} $\nu_k(G)$ of $G$ is the minimum number of
crossings in a $k$-page drawing of $G$.  We investigate the
pagenumbers and $k$-page crossing numbers of complete bipartite
graphs. We find the exact pagenumbers of several complete bipartite
graphs, and use these pagenumbers to find the exact $k$-page crossing number
of $K_{k+1,n}$ for $k\in \{3,4,5,6\}$. We also prove the general
asymptotic estimate $\lim_{k\to\infty} \lim_{n\to\infty}
\nu_k(K_{k+1,n})/(2n^2/k^2)=1$. Finally, we give general upper bounds
for $\nu_k(K_{m,n})$, and relate these bounds to the $k$-planar
crossing numbers of $K_{m,n}$ and $K_n$.
\end{abstract}


{\bf Keywords:} $2$-page crossing number, book crossing number,
complete bipartite graphs, Zarankiewicz conjecture

{\bf AMS Subject Classification:} 90C22, 90C25, 05C10, 05C62, 57M15, 68R10

\section{Introduction}

In~\cite{leighton}, Chung, Leighton, and Rosenberg proposed the model
of embedding graphs in books. We recall that a
{\em book} consists of a line (the {\em spine}) and $k\ge 1$
half-planes (the {\em pages}), such that the boundary of each page is
the spine. In a {\em book embedding}, each edge is drawn on a single
page, and no edge crossings are allowed.
The {\em pagenumber} (or {\em book thickness}) $p(G)$ of a graph $G$ is the minimum $k$ such that $G$
can be embedded in a $k$-page
book~\cites{Bernhart-Kainen,dw,leighton,Kainen}. Not surprisingly,
determining the pagenumber of an arbitrary graph is
NP-Complete~\cite{leighton}.

In a {\em book drawing} (or
{\em $k$-page drawing}, if the book has $k$ pages), each
edge is drawn on a single page, but edge crossings are allowed.
The {\em $k$-page crossing number
  $\nu_k(G)$} of a graph $G$ is the minimum number of crossings in a
$k$-page drawing of $G$.

Instead of using a straight line as the spine and halfplanes as pages,
it is sometimes convenient to visualize a $k$-page drawing using the
equivalent {\em circular model}.  In this model, we view a $k$-page drawing of a
graph $G=(V,E)$ as a set of $k$ circular drawings of graphs $G^{(i)} =
(V, E^{(i)})$ $(i=1,\ldots,k)$, where the edge sets $E^{(i)}$ form a
$k$-partition of $E$, and such that the vertices of $G$ are arranged
identically in the $k$ circular drawings.  In other words, we assign each edge in $E$ to
exactly one of the $k$ circular drawings. In Figure~\ref{fig:k453pages}
we illustrate a $3$-page drawing of $K_{4,5}$ with $1$ crossing.

\begin{figure}[h!]
\begin{center}
\includegraphics[width=15cm]{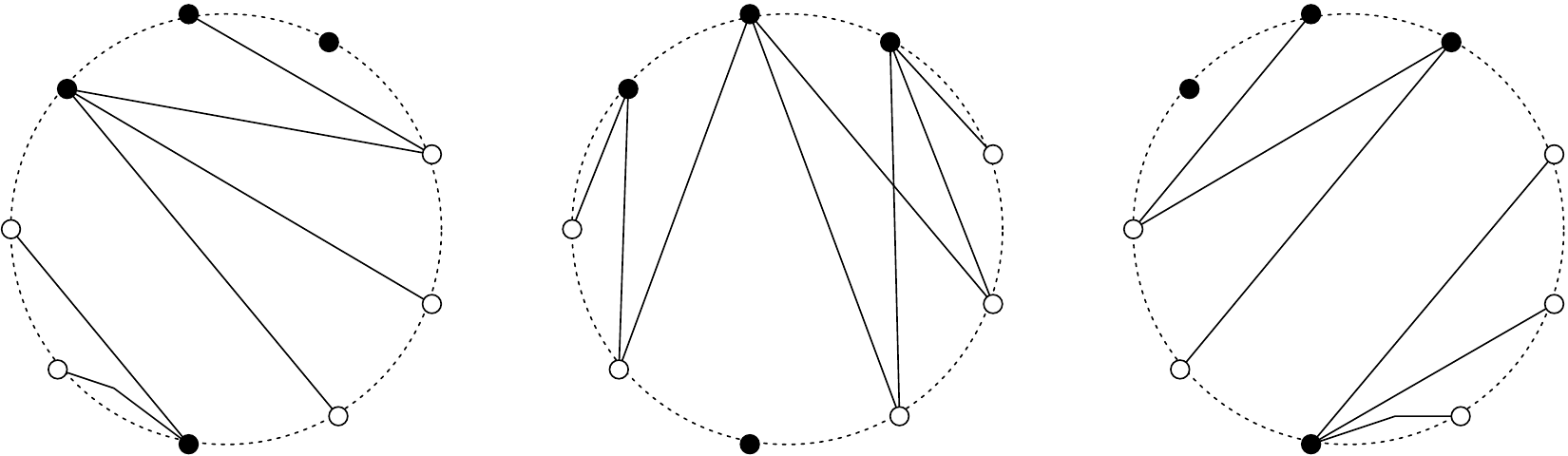}
\caption{\label{fig:k453pages} \small A $3$-page drawing of $K_{4,5}$
  with $1$ crossing. Vertices in the chromatic class of size $4$ are
  black, and vertices in the chromatic class of size $5$ are white. We
  have proved (Theorem~\ref{thm:exact1}) that the pagenumber of
  $K_{4,5}$ is $4$, and so it follows that $\nu_3(K_{4,5}) \ge 1$. Now
  this $1$-crossing drawing implies that $\nu_3(K_{4,5}) \le 1$, and
  so it follows that $\nu_3(K_{4,5}) = 1$.}
\end{center}
\end{figure}

Very little is known about the pagenumbers or $k$-page crossing
numbers of interesting families of graphs.
Even computing the pagenumber of planar graphs
is a nontrivial task; Yannakakis proved~\cite{Yannanakis} that
four pages always suffice, and sometimes are required, to embed a
planar graph.
It is a standard exercise to show that
the pagenumber $p(K_n)$ of the complete graph $K_n$ is $\ceil{n/2}$. Much less is known about the $k$-page crossing
numbers of complete graphs. A thorough treatment of $k$-page crossing
numbers (including estimates for $\nu_k(K_n)$), with
general lower and upper bounds, was offered by Shahrokhi et
al.~\cite{sssv96}.
In~\cite{dps}, de Klerk et al.~recently
used a variety of techniques to compute several exact $k$-page
crossing numbers of complete graphs, as well as to give some
asymptotic estimates.


Bernhart and Kainen~\cite{Bernhart-Kainen} were the first to
investigate the pagenumbers of complete bipartite graphs, giving lower
and upper bounds for $p(K_{m,n})$. The upper bounds
in~\cite{Bernhart-Kainen} were then improved by Muder, Weaver, and
West~\cite{muder}. These upper bounds were further improved by
Enomoto, Nakamigawa, and Ota~\cite{Enomoto}, who derived the best
estimates known to date.
Much less is known about the $k$-page crossing number of
$K_{m,n}$.


\subsection{$1$-page drawings of $K_{m,n}$}

Although calculating the $1$-page crossing number of the complete graph $K_n$ is
trivial, this is by no means the case for
the complete bipartite graph $K_{m,n}$.
Still, our knowledge about $\nu_1(K_{m,n})$ is almost completely
satisfactory, due to the following result by
Riskin~\cite{riskin}.

\begin{theorem}[Riskin~\cite{riskin}] If $m\bigl|n$ then $\nu_1(K_{m,n}) =
  \frac{1}{12} n(m -1)(2mn - 3m - n)$, and this
minimum value is attained when the $m$ vertices are distributed evenly amongst
the $n$ vertices.
\end{theorem}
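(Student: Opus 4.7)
I would work in the circular model: place the $m$ small-part vertices $v_1,\dots,v_m$ in cyclic order on a circle, dividing it into $m$ arcs, and let $a_i\ge 0$ be the number of large-part vertices lying in the arc between $v_i$ and $v_{i+1}$, so $a_1+\cdots+a_m=n$. By symmetry among the large-part vertices (they all have the same neighborhood in $K_{m,n}$), the number of crossings depends only on the cyclic composition $(a_1,\dots,a_m)$, not on the fine placement of large-part vertices within each arc.

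Next I would derive a closed-form counting expression. For any unordered pair $\{w,w'\}$ of large-part vertices lying in arcs at cyclic distance $s$ (with $s=0$ when they share an arc), a direct interleavement analysis of the induced $K_{2,m}$ on $\{w,w'\}$ and all of $\{v_1,\dots,v_m\}$ shows that these two vertices contribute exactly $\binom{s}{2}+\binom{m-s}{2}$ crossings. Summing over the $\binom{n}{2}$ pairs, and using the algebraic identities $\binom{s}{2}+\binom{m-s}{2}=\binom{m}{2}-s(m-s)$ and $\sum_i\binom{a_i}{2}+\sum_{i<j}a_ia_j=\binom{n}{2}$, one obtains
\begin{equation*}
  \nu \;=\; \binom{m}{2}\binom{n}{2} \;-\; \sum_{1\le i<j\le m} a_i a_j\,(j-i)\bigl(m-(j-i)\bigr).
\end{equation*}
Thus minimizing crossings is equivalent to maximizing the circulant quadratic form $Q(a):=\sum_{i<j} a_ia_j\,d_{ij}(m-d_{ij})$, where $d_{ij}$ is the cyclic distance between arcs $i$ and $j$, subject to $\sum a_i=n$ and $a_i\ge 0$.

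The crux is to show that $Q$ is maximized on the simplex at the uniform vector $a_i=n/m$, which is integral because $m\mid n$, and so is an admissible composition. I would establish this by a symmetrization argument: since the weights $d(m-d)$ are symmetric and positive, and since the coefficient matrix of $Q$ is a symmetric non-negative circulant with constant row sums, the all-ones vector is an eigenvector (and a Perron eigenvector for the Rayleigh quotient restricted to the non-negative cone of fixed sum). Equivalently, a careful two-coordinate swap argument shows that replacing any two unequal values $a_i,a_j$ by their average can only increase $Q$. Substituting $a_i=n/m$ and applying the evaluation $\sum_{d=1}^{m-1}(m-d)\,d(m-d)=m^2(m^2-1)/12$ then reduces the expression, after routine algebra, to the claimed value $\tfrac{1}{12}n(m-1)(2mn-3m-n)$; matching upper bound is provided by the balanced drawing itself.

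The main obstacle is the maximization step: because the weights in $Q$ depend on cyclic distance rather than only on the multiset of $a_i$'s, a naïve rearrangement argument does not directly apply. The cleanest route is the Fourier/spectral one (the circulant structure forces the all-ones vector into the top eigenspace); the more elementary swap argument works too but requires careful bookkeeping of how the quantity $d(m-d)$ behaves under a local rebalance.
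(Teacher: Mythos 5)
First, note that the paper does not prove this statement: it is quoted verbatim from Riskin's article, so there is no in-paper proof to compare against; I can only assess your argument on its own terms. Your reduction is correct and standard: in a $1$-page drawing only the cyclic order matters, a pair of white vertices separated by $s$ and $m-s$ black vertices contributes exactly $\binom{s}{2}+\binom{m-s}{2}$ crossings, and this yields the identity $\nu=\binom{m}{2}\binom{n}{2}-\sum_{i<j}a_ia_j\,d_{ij}(m-d_{ij})$, whose evaluation at $a_i=n/m$ does give $\frac{1}{12}n(m-1)(2mn-3m-n)$.

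The genuine gap is in the maximization step, and \emph{both} justifications you offer for it are invalid as stated. The averaging claim is false: take $m=8$ (so $c_d=d(8-d)$, $c_1=7$, $c_3=15$, $c_4=16$) and $a=(10,0,0,0,2,0,0,0)$, for which $Q=10\cdot 2\cdot 16=320$; averaging the unequal entries $a_2=0$ and $a_5=2$ gives $a'=(10,1,0,0,1,0,0,0)$ and $Q'=70+160+15=245<320$. The spectral claim is also not a valid principle: for a nonnegative symmetric matrix, the Perron eigenvector maximizes the Rayleigh quotient under an $\ell_2$ constraint, not $x^TCx$ over the simplex $\{x\ge 0,\ \sum x_i=n\}$ (the Motzkin--Straus theorem shows the simplex maximizer for an adjacency matrix sits on a maximum clique, not on the Perron direction). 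What actually makes the conclusion true is stronger than ``$\mathbf{1}$ is in the top eigenspace'': writing $a=\frac{n}{m}\mathbf{1}+b$ with $\mathbf{1}^Tb=0$ gives $a^TCa=\frac{n^2}{m}\lambda_0+b^TCb$, so you need \emph{every} nontrivial eigenvalue of the circulant with symbol $d(m-d)$ to be nonpositive, i.e.\ concavity of $Q$ on the hyperplane. This is true but must be proved; a direct Fourier computation gives $\lambda_j=2m\omega^j/(\omega^j-1)^2=m/(\cos(2\pi j/m)-1)<0$ for $j\neq 0$ (equivalently, the symmetric convex sequence $\binom{d}{2}+\binom{m-d}{2}$ is positive semidefinite on $\LZ_m$). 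With that verification inserted, the uniform vector is the unique maximizer on the whole hyperplane, nonnegativity and integrality ($m\mid n$) come along for free, and your proof closes; without it, the crucial step is unsupported.
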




\subsection{$2$-page drawings of $K_{m,n}$}

Zarankiewicz's Conjecture states that the (usual)
crossing number $\ucr(K_{m,n})$ of $K_{m,n}$ equals
$Z(m,n):=\floor{\frac{m}{2}}\floor{\frac{m-1}{2}}\floor{\frac{n}{2}}\floor{\frac{n-1}{2}}$,
for all positive integers $m,n$. Zarankie\-wicz~\cite{zaran} found
drawings of $K_{m,n}$ with exactly $Z(m,n)$ crossings, thus proving
$\ucr(K_{m,n}) \le Z(m,n)$. These drawings can be easily adapted to
$2$-page drawings (without increasing the number of crossings), and so
it follows that $\nu_2(K_{m,n}) \le Z(m,n)$.

Since $\ucr(G) \le \nu_2(G)$ for any  $G$, Zarankiewicz's
Conjecture implies the (in principle, weaker) conjecture
$\nu_2(K_{m,n}) = Z(m,n)$. Zarankiewicz's
Conjecture has been verified (for $\ucr(K_{m,n})$, and thus also
for $\nu_2(K_{m,n})$) for $\min\{m,n \}\le 6$~\cite{kleitman}, and for
the special cases $(m,n)\in \{(7,7),(7,8), (7,9), (7,10), (8,8),
(8,9),$ $ (8,10)\}$~\cite{woodall}. Recently, de Klerk and
Pasechnik~\cite{dp} used semidefinite programming techniques to prove
that $\lim_{n\to\infty} \nu_2(K_{7,n})/Z(7,n) = 1$.


\subsection{$k$-page drawings of $K_{m,n}$ for $k \ge 3$: lower bounds}\label{sub:low}

As far as we know, neither exact results nor estimates for
$\nu_k(K_{m,n})$ have been reported in the literature, for any $k \ge 3$.
Indeed, all the nontrivial results known about $\nu_k(K_{m,n})$ are
those that can be indirectly derived from the thorough investigation
of Shahrokhi, S\'ykora, Sz\'ekely, and Vrt'o on multiplanar crossing
numbers~\cite{sssv07}.

We recall that a {multiplanar drawing} is similar to a book
drawing, but involves unrestricted planar drawings. Formally, let
$G=(V,E)$ be a graph. A $k$-{\em planar drawing} of $G$ is a set of $k$
planar drawings of graphs $G^{(i)} = (V, E^{(i)})$ $(i=1,\ldots,k)$, where the edge sets
$E^{(i)}$ form a $k$-partition of $E$. Thus, to obtain the
$k$-planar drawing, we take the
drawings of the graphs $G^{(i)}$, and (topologically) identify the $k$
copies of each vertex. The $k$-{\em planar crossing number}
$\Cr_k(G)$ of $G$ is the minimum number of crossings in a $k$-planar
drawing of $G$.  A {\em multiplanar drawing} is a $k$-planar drawing
for some positive integer $k$.

It is very easy to see that $\nu_k(G) \ge \Cr_{\ceil{k/2}}(G)$, for every
graph $G$ and every nonnegative integer $k$. Thus lower bounds of
multiplanar (more specifically,
$r$-planar) crossing numbers immediately imply lower bounds of book
(more specifically, $2r$-page) crossing numbers.  A strong result by Shahrokhi, S\'ykora,
Sz\'ekely, and Vrt'o is the exact determination of the $r$-planar
crossing number of $K_{2r+1,n}$ (\cite{sssv07}*{Theorem 3}:
\begin{equation}\nonumber
\ucr_r(K_{2r+1,n})=
\biggfloor{
\frac{n}{2r(2r-1)}
}
\biggl(
n-r(2r-1)
\biggl(
\biggfloor{
\frac{n}{2r(2r-1)}
}
-1
\biggr)
\biggr).
\end{equation}

Using this result and our previous observation $\nu_k(G) \ge
\Cr_{\ceil{k/2}}(G)$, one obtains:

\begin{theorem}[Follows from~\cite{sssv07}*{Theorem 3}]\label{thm:fromsssv}
For every even integer $k$ and every integer $n$,
\[
\text{
\hglue 2.3 cm
}
\nu_k(K_{k+1,n}) \ge
\biggfloor{
\frac{n}{k(k-1)}
}
\biggl(
n-{{\frac{k}{2}}}\biggl(k-1\biggr)
\biggl(
\biggfloor{
\frac{n}{k(k-1)}
}
-1
\biggr)
\biggr).
\text{
\hglue 2.4 cm $\Box$
}
\]
\end{theorem}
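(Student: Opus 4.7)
The plan is to chain together the two facts laid out in the paragraph immediately preceding the statement: the elementary inequality $\nu_k(G) \ge \Cr_{\ceil{k/2}}(G)$, and the exact evaluation of $\ucr_r(K_{2r+1,n})$ proved in~\cite{sssv07}*{Theorem 3}. Since no additional drawing-theoretic input is required, the whole argument reduces to a parameter substitution.

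First I would set $r := k/2$, which is a nonnegative integer because $k$ is even. Then $\ceil{k/2} = r$, so the book-to-multiplanar inequality specializes to
$$\nu_k(K_{k+1,n}) \ge \Cr_{k/2}(K_{k+1,n}).$$
Next I would record the three identities $2r+1 = k+1$, $2r(2r-1) = k(k-1)$, and $r(2r-1) = (k/2)(k-1)$, all immediate from $r = k/2$. Plugging these into the closed-form expression for $\ucr_r(K_{2r+1,n})$ recalled in the excerpt produces exactly the right-hand side of the inequality stated in the theorem, and combining with the previous display finishes the proof.

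There is no real obstacle; the only subtlety worth flagging is that both the parity condition ``$k$ even'' \emph{and} the specific first-side size $k+1$ are essential for making the substitution $r = k/2$ fit the form $K_{2r+1,n}$ covered by~\cite{sssv07}*{Theorem 3}. For odd $k$, or for bipartite graphs $K_{m,n}$ with $m \ne k+1$, the parities of $2r$ and $k$ no longer align, and the clean identification with the Shahrokhi--S\'ykora--Sz\'ekely--Vrt'o formula would break down, yielding at best a weaker bound.
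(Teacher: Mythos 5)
Your proposal is correct and matches the paper's own (essentially one-line) argument: the theorem is obtained exactly by specializing $r=k/2$ in the formula of \cite{sssv07}*{Theorem 3} and combining with the observation $\nu_k(G)\ge \Cr_{\lceil k/2\rceil}(G)$ stated just before it. The parameter identities you record ($2r+1=k+1$, $2r(2r-1)=k(k-1)$, $r(2r-1)=\tfrac{k}{2}(k-1)$) are precisely what the paper's substitution uses, so there is nothing to add.
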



Regarding general lower bounds,
using the following inequality from~\cite{sssv07}*{Theorem 5}
\begin{equation}\nonumber 
\ucr_r(K_{m,n}) \ge \frac{1}{3(3r-1)^2}{m\choose 2}{n\choose 2},
\text{\rm \hglue 0.4 cm for } \, m\ge
6r-1 \text{\rm \, and\, } n\ge \max\{6r-1,2r^2\},
\end{equation}
and the observation $\nu_k(K_{m,n}) \ge
\ucr_{\ceil{k/2}}(K_{m,n})$, one obtains
\begin{equation}\label{eq:lobokp}
\nu_k(K_{m,n}) \ge \frac{1}{3(3\ceil{\frac{k}{2}}-1)^2}{m\choose 2}{n\choose
  2},
\text{\rm for } \, m\ge
6\ceil{k/2}-1 \text{\rm \, and\, } n\ge \max\{6\ceil{k/2}-1,2\ceil{k/2}^2\}.
\end{equation}

We finally remark that slightly better bounds can be obtained in the
case $k=4$, using the bounds for biplanar crossing numbers by
Czabarka, S\'ykora, Sz\'ekely, and Vrt'o~\cites{bip1,bip2}.


\subsection{$k$-page drawings of $K_{m,n}$ for $k \ge 3$: upper bounds}

We found no references involving upper bounds of $\nu_k(K_{m,n})$
in the literature. We note that since not every $\ceil{k/2}$-planar
drawing can be adapted to a $k$-page drawing, upper bounds for
$\ceil{k/2}$-planar crossing numbers do not yield upper bounds for
$k$-page crossing numbers, and so the results on $(k/2)$-planar drawings
of $K_{m,n}$ in~\cite{sssv07} cannot be used to derive upper bounds for
$\nu_k(K_{m,n})$.

Below (cf.~Theorem~\ref{thm:upp1}) we shall give general upper
bounds for $\nu_k(K_{m,n})$. We derive these bounds using a natural
construction, described in Section~\ref{sec:cons}.


\section{Main results}\label{sec:mainr}

In this section we state the main new results in this paper, and briefly discuss the
strategies of their proofs.

\subsection{Exact pagenumbers}

We have calculated the exact pagenumbers of several complete bipartite graphs:

\begin{theorem}~\label{thm:exact1}
For each $k\in \{2,3,4,5,6\}$, the pagenumber of
$K_{k+1,\Auxone+1}$ is $k+1$.
\end{theorem}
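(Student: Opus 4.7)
Set $m=k+1$ and $n = \floor{(k+1)^2/4}+1 = \floor{m/2}\ceil{m/2}+1$. The theorem splits into the upper bound $p(K_{m,n})\le k+1$ and the lower bound $p(K_{m,n})\ge k+1$.

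For the upper bound, the plan is to exhibit an explicit $(k+1)$-page embedding of $K_{m,n}$ for each of the five values of $k$. A natural template is to arrange the $m$ black vertices on the spine with the $n$ white vertices distributed in a balanced interleaved pattern into the gaps between consecutive blacks, and then assign each edge to a page so that within each page the resulting arcs never cross. For the specific small values $k\in\{2,3,4,5,6\}$ such embeddings can be produced by hand or extracted from the general construction of Enomoto, Nakamigawa and Ota~\cite{Enomoto}, whose upper bound on $p(K_{m,n})$ is known to be sharp in this range; a drawing in the circular model then just has to be checked to be crossing-free page by page.

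For the lower bound I argue by contradiction. Suppose $K_{m,n}$ admits a $k$-page embedding. Fix the linear order of the vertices on the spine. Each page $P_i$ then hosts a non-crossing bipartite graph $G_i$ on the given sequence of black and white vertices. The core of the argument is a structural lemma bounding $|E(G_i)|$ in terms of how the black and white vertices interleave on the spine. Summing these bounds across the $k$ pages must account for all $mn$ edges of $K_{m,n}$; the target value $n=\floor{m/2}\ceil{m/2}+1$ is exactly one more than what is simultaneously achievable by $k$ non-crossing pages, yielding the contradiction.

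The main obstacle is producing a per-page bound tight enough to reach the threshold $\floor{m/2}\ceil{m/2}$. A crude estimate such as the bipartite planar bound $|E(G_i)|\le 2(m+n)-4$ only rules out pagenumbers when $n\gtrsim k(k-1)$, which is far weaker than required. To attain the exact threshold one must exploit the fine structure of each page: at each vertex $v$ of the small side, the edges incident with $v$ that lie in a fixed page form a fan of non-crossing arcs, and the non-crossing condition across distinct such fans (as well as between fans and individual arcs) imposes strong combinatorial constraints. Because $k$ is restricted to $\{2,3,4,5,6\}$, I expect the argument to be finitary: a careful case analysis over the possible orderings of the $m+n$ vertices on the spine (up to reflection and colour-class symmetry), perhaps assisted by a computer enumeration in the spirit of~\cite{dp,dps}, rather than by a single uniform combinatorial proof.
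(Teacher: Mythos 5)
Your overall skeleton --- a cheap upper bound plus a lower bound obtained by fixing the spine ordering and ruling out every ordering one by one --- does match the paper's strategy, and your remark that the upper bound is the easy half is correct (the paper uses something even simpler than an explicit construction: $K_{k+1,n}$ embeds in $k+1$ pages for \emph{every} $n$, by placing the star of each of the $k+1$ small-side vertices on its own page). The gap is in the lower bound. The ``structural lemma bounding $|E(G_i)|$'' on which your contradiction hinges is never stated, and the natural candidate --- a per-page upper bound on the number of edges of a non-crossing bipartite graph on a fixed circular order, summed over the $k$ pages --- cannot work. Already for $k=3$ the graph $K_{4,5}$ has $20$ edges, while a single page can carry $8$ pairwise non-crossing edges of $K_{4,5}$ for a suitable interleaved ordering (a Hamiltonian-path-like fan along the circle), so $3\cdot 8=24>20$ and no contradiction arises from counting alone. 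The obstruction is genuinely about the \emph{interaction} between pages: the pages partition the edge set, and the maximal non-crossing subgraphs for a fixed ordering overlap heavily, which is exactly the information an additive edge count discards. You correctly diagnose that the crude planar bound is too weak, but you do not supply the sharper mechanism.

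The paper's mechanism is a reformulation as a coloring problem: for each circular ordering $D$ one forms the crossing graph $G_{D}(K_{m,n})$ whose vertices are the edges of $K_{m,n}$ and whose adjacencies are the crossings in $D$; a $k$-page embedding with ordering $D$ is precisely a proper $k$-coloring of $G_{D}(K_{m,n})$ (Lemma~\ref{lemma:chromatic}). The lower bound then amounts to certifying $\chi(G_{D}(K_{m,n}))>k$ for all $10$, $38$, $210$, resp.\ $1980$ orderings (counted via Burnside's lemma), which the paper does by computing the Lov\'asz $\vartheta$-number of $\overline{G_{D}(K_{m,n})}$ and, in the few cases where $\vartheta$ is inconclusive, by SAT or integer-programming solvers. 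Your closing sentence (``a careful case analysis over the possible orderings \dots perhaps assisted by a computer enumeration'') gestures toward this, but as written it is a promissory note: without the coloring reformulation, or some equally sharp per-ordering certificate of infeasibility, the central step of the lower bound is missing.
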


The proof of this statement is computer-aided, and is based on the
formulation of $\nu_k(K_{m,n})$ as a vertex coloring problem on an
associated graph.
This is presented in Section~\ref{sec:pe1}.

By the clever construction by Enomoto, Nakamigawa, and Ota~\cite{Enomoto},
$K_{k+1,\Auxone}$ can be embedded into $k$ pages,
and so Theorem~\ref{thm:exact1} implies (for $k\in\{2,3,4,5,6\}$) that
$\floor{\frac{(k+1)^2}{4}}+1$ is the smallest value of $n$ such that
$K_{k+1,n}$ does {\em not} embed in $k$ pages. The case  $k=2$ follows
immediately
from the nonplanarity of $K_{3,3}$; we have included this value in the
statement for completeness.

\subsection{The $k$-page crossing number of $K_{k+1},n$: exact results
and bounds}

Independently of the intrinsic value of learning some exact
pagenumbers, the importance of Theorem~\ref{thm:exact1} is that we
need these results in order to establish the following general result.
We emphasize that we follow the convention that $\binom{a}{b}=0$ whenever $a<b$.


\begin{theorem}\label{thm:main1}
Let $k\in \{2,3,4,5,6\}$, and let $n$ be any positive integer.
Define $\ell:= \auxone$ and
$q:=\nmodk{n}{\bigfloor{\frac{(k+1)^2}{4}}}$.
Then
\[
\nu_k(K_{k+1,n}) =
q\cdot \binom{\frac{n-q}{\ell}+1}{2}
+
\bigl(
\ell-q
\bigr)
\cdot \binom{\frac{n-q}{\ell}}{2}
.
\]
\end{theorem}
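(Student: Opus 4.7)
The plan is to establish the theorem by matching upper and lower bounds, both keyed to the integer $\ell = \floor{(k+1)^2/4}$ supplied by Theorem~\ref{thm:exact1}.

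For the upper bound, I would construct a $k$-page drawing by starting from the zero-crossing $k$-page embedding of $K_{k+1,\ell}$ due to Enomoto, Nakamigawa, and Ota, and then ``blowing up'' each of its $\ell$ white vertices into a small cluster of white vertices placed at nearby spine positions. Writing $n = a\ell + q$ with $0 \le q < \ell$ (so $a = (n-q)/\ell$), I would use $q$ clusters of size $a+1$ and $\ell - q$ clusters of size $a$, and assign each edge from a cluster vertex to the same page as the corresponding edge incident to the original white vertex. Because the original embedding is crossing-free and the clusters are localized, no crossings arise between distinct clusters. Within a cluster of size $s$, the only crossings come from the ``doubled'' pages: in the original embedding each white vertex has its $k+1$ edges distributed among the $k$ pages as $(2,1,\ldots,1)$, so exactly one page carries two of those edges, and blowing up turns that page into a $K_{2,s}$-subdrawing that contributes precisely $\binom{s}{2}$ crossings. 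Summing over clusters gives the claimed upper bound $q\binom{a+1}{2}+(\ell-q)\binom{a}{2}$.

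For the lower bound, I plan to combine Theorem~\ref{thm:exact1} with Tur\'an's theorem in a clean double-counting. Fix any $k$-page drawing of $K_{k+1,n}$; for each unordered pair $\{w,w'\}$ of white vertices, let $x_{\{w,w'\}}$ be the number of crossings between an edge incident to $w$ and an edge incident to $w'$. Since every crossing involves exactly two white vertices, the total crossing count equals $\sum_{\{w,w'\}} x_{\{w,w'\}}$. By Theorem~\ref{thm:exact1}, any $k$-page drawing of $K_{k+1,\ell+1}$ has at least one crossing, so restricting our drawing to any $(\ell+1)$-subset $S$ of white vertices yields $\sum_{\{w,w'\}\subseteq S} x_{\{w,w'\}} \ge 1$. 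Define the conflict graph $E = \{\{w,w'\} : x_{\{w,w'\}} \ge 1\}$ on the $n$ white vertices; every $(\ell+1)$-subset of white vertices then contains an edge of $E$, so the complement $\bar E$ is $K_{\ell+1}$-free. Tur\'an's theorem gives $|\bar E| \le \binom{n}{2} - q\binom{a+1}{2} - (\ell-q)\binom{a}{2}$, whence $|E| \ge q\binom{a+1}{2}+(\ell-q)\binom{a}{2}$. Since each $\{w,w'\}\in E$ contributes at least $1$ to $\sum x_{\{w,w'\}}$, the lower bound follows.

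The main obstacle is on the constructive side: one must verify that the Enomoto--Nakamigawa--Ota embedding of $K_{k+1,\ell}$ can be arranged so that each white vertex has the balanced page-distribution $(2,1,\ldots,1)$. Any other distribution $(t_1,\ldots,t_k)$ at a single vertex would blow up into $\binom{s}{2}\sum_i\binom{t_i}{2} > \binom{s}{2}$ crossings within a cluster, breaking the match with the Tur\'an-based lower bound. Once this structural property of the construction has been confirmed for each $k\in\{2,3,4,5,6\}$---or an equivalent zero-crossing embedding with the property is exhibited---the two bounds coincide and give the exact value of $\nu_k(K_{k+1,n})$ claimed in the statement.
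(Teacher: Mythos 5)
Your overall architecture --- a Tur\'an-based lower bound keyed to Theorem~\ref{thm:exact1}, matched by an upper bound obtained by blowing up the white vertices of a crossing-free $k$-page embedding of $K_{k+1,\ell}$ into clusters --- is exactly the paper's (Propositions~\ref{pro:low1} and~\ref{pro:exon} give the lower bound, Propositions~\ref{pro:upp1} and~\ref{pro:ek} the upper bound). The lower-bound half of your argument is complete and matches Proposition~\ref{pro:low1} essentially verbatim: the conflict graph on the $n$ white vertices, the observation that Theorem~\ref{thm:exact1} forbids an independent set of size $\ell+1$, and Tur\'an's theorem give precisely the claimed count.

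The gap is the one you yourself flag, and it is not a formality: the Enomoto--Nakamigawa--Ota embedding of $K_{k+1,\ell}$ is \emph{not} balanced, and cannot simply be ``arranged'' to be. The paper exhibits this explicitly (Figure~\ref{fig:notbal}: in the $5$-page ENO embedding of $K_{6,9}$ the white vertex $w_0$ has load $4$ on a single page), and as your own computation $\binom{s}{2}\sum_i\binom{t_i}{2}$ shows, any white vertex whose load distribution differs from $(2,1,\dots,1)$ ruins the match with the Tur\'an bound after blow-up. Supplying a balanced crossing-free embedding is the genuinely technical part of the paper's upper bound: Proposition~\ref{pro:ek} keeps only the ENO layout of vertices on the spine and redesigns the edge-to-page assignment from scratch (six families of edges, Types I--VI, distributed over the $s+t-1$ pages of $K_{s+t,st}$ with $t\in\{s,s+1\}$), then verifies that every white vertex has load at least $1$ on every page, which forces the $(2,1,\dots,1)$ distribution. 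Until you construct such an embedding --- case by case for $k\in\{2,\dots,6\}$, or uniformly in $k$ as the paper does --- your upper bound is conditional and the theorem is not proved. (Doing it uniformly is worth the effort: the same balanced embedding also delivers the upper bound of Theorem~\ref{thm:main2} for all $k$.)
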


In this statement we have included the case $k=2$ again for completeness, as it
asserts the
well-known result that the $2$-page crossing number of $K_{3,n}$ equals $Z(3,n)=\bigfloor{\frac{n}{2}}
\bigfloor{\frac{n-1}{2}}$.

Although our techniques do not yield the exact value of
$\nu_k(K_{k+1,n})$ for other values of $k$, they give lower and
upper bounds that imply sharp asymptotic estimates:

\begin{theorem}\label{thm:main2}
Let $k, n$ be positive integers. Then
\[
2n^2
\biggl(
\frac{1}{k^2 + 2000k^{7/4}}
\biggr)
-n
<  \nu_k(K_{k+1,n}) \le
\frac{2n^2}{k^2} + \frac{n}{2}.
\]
Thus
\[
\lim_{k\to\infty} \left( \lim_{n\to\infty} \frac{\nu_k(K_{k+1,n})}{2n^2/k^2}\right) = 1.
\]
\end{theorem}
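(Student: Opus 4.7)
The plan is to prove the upper and lower bounds separately and deduce the asymptotic from them. For the \emph{upper bound}, I would apply the natural construction of Section~\ref{sec:cons}: starting from the Enomoto--Nakamigawa--Ota $k$-page embedding of $K_{k+1,\ell}$, where $\ell := \auxone$, I replace each of the $\ell$ large-side vertices by a cluster of $\lceil n/\ell\rceil$ or $\lfloor n/\ell\rfloor$ consecutive spine vertices inheriting the same page assignments. All crossings are then between edges incident to a common cluster, and a direct count yields $q\binom{a+1}{2} + (\ell-q)\binom{a}{2}$ crossings, with $q = n \bmod \ell$ and $a = (n-q)/\ell$ (this is exactly the expression in Theorem~\ref{thm:main1}). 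Elementary manipulation rewrites it as $(a/2)(n+q-\ell) \le n^2/(2\ell)$, and using $\ell \ge k^2/4$ this is at most $2n^2/k^2 + n/2$.

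For the \emph{lower bound}, I would use the standard counting inequality. Given any $k$-page drawing $D$ of $K_{k+1,n}$ and any $s$-subset $S$ of the large side, the induced sub-drawing of $K_{k+1,s}$ has at least $\nu_k(K_{k+1,s})$ crossings; summing over all $\binom{n}{s}$ choices of $S$, each crossing in $D$ (which is determined by its two large-side endpoints) is counted $\binom{n-2}{s-2}$ times, giving
\[
\nu_k(K_{k+1,n}) \;\ge\; \frac{n(n-1)}{s(s-1)}\, \nu_k(K_{k+1,s}).
\]
I would then take $s := \ell + \lceil \auxthr \rceil$ and invoke a quantitative pagenumber lower bound on $K_{k+1,s}$, to be established separately in the paper (the dedicated macro $\auxthr = 40k^{7/4}$ in the preamble hints at exactly such a result), strong enough that $\nu_k(K_{k+1,s})/[s(s-1)] \ge 2/(k^2 + 2000k^{7/4})$. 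Plugging this in and absorbing the gap between $n(n-1)$ and $n^2$ into the additive $-n$ term completes the lower bound; the small-$n$ regime $n \le s$ is vacuous since $-n$ already dominates there.

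For the \emph{asymptotic}, dividing both bounds by $2n^2/k^2$ yields
\[
\frac{k^2}{k^2 + 2000 k^{7/4}} - \frac{k^2}{2n} \;\le\; \frac{\nu_k(K_{k+1,n})}{2n^2/k^2} \;\le\; 1 + \frac{k^2}{4n}.
\]
For fixed $k$, letting $n\to\infty$ pins the middle between $1/(1 + 2000/k^{1/4})$ and $1$; then letting $k\to\infty$ forces both endpoints to $1$, yielding the claimed double limit.

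The \emph{main obstacle} is the base case. Theorem~\ref{thm:exact1} only delivers $\nu_k(K_{k+1,\ell+1}) \ge 1$, and only for $k\le 6$; even then, such a $1$-crossing base, after counting, produces only a bound of order $n^2/k^4$, too weak by a factor of $k^2$. What is truly needed is a quantitative pagenumber lower bound valid for all $k$ that forces $\Omega(k^2)$ crossings already at $s = \ell + O(k^{7/4})$, and the numerical slack $40 k^{7/4}$ in the size $s$ translates directly into the denominator term $2000 k^{7/4}$ in the theorem.
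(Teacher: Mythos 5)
Your lower bound has a genuine gap, and you have in fact named it yourself: the subset-averaging inequality $\nu_k(K_{k+1,n}) \ge \frac{n(n-1)}{s(s-1)}\nu_k(K_{k+1,s})$ requires a base case $\nu_k(K_{k+1,s}) = \Omega(k^2)$ at $s = \lfloor (k+1)^2/4\rfloor + O(k^{7/4})$, and no such quantitative bound is established in the paper or follows readily from what is. The paper never proves anything stronger than $\nu_k(K_{k+1,s+1}) \ge 1$ (i.e., non-embeddability, Propositions~\ref{pro:exon} and~\ref{pro:extw}), and the whole point of its lower-bound mechanism (Proposition~\ref{pro:low1}) is that this bare non-embeddability already suffices: one builds the auxiliary graph $G$ on the $n$ white vertices, joining $u$ and $v$ whenever some edge at $u$ crosses some edge at $v$; non-embeddability of $K_{k+1,s+1}$ forces $G$ to have no independent set of size $s+1$, so by Tur\'an's theorem $G$ has at least $e(\overline{T}(n,s)) \approx n^2/(2s)$ edges, each witnessing a distinct crossing. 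With $s \approx k^2/4 + 500k^{7/4}$ this gives the full $2n^2/(k^2+2000k^{7/4}) - n$ directly. Your averaging route with base $\nu_k(K_{k+1,s})\ge 1$ yields only $\Theta(n^2/k^4)$, and deferring to a ``quantitative pagenumber lower bound \ldots to be established separately'' is not a proof; that lemma does not exist in the paper, and the Tur\'an argument is precisely the idea that replaces it.

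Your upper bound is essentially the paper's construction but skips a necessary ingredient. The crossing count $q\binom{a+1}{2}+(\ell-q)\binom{a}{2}$ for the cluster blow-up is valid only if the underlying $k$-page embedding of $K_{k+1,\ell}$ is \emph{balanced}, i.e., every white vertex has load $2$ in exactly one page and load $1$ in the rest; a vertex with load $j$ in some page contributes $\binom{j}{2}\binom{t}{2}$ crossings there after being blown up into a $t$-cluster. The Enomoto--Nakamigawa--Ota embedding you invoke is \emph{not} balanced (the paper exhibits a white vertex of load $4$ in one page in Figure~\ref{fig:notbal}), so applying the blow-up to it does not give the claimed count. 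The paper devotes Proposition~\ref{pro:ek} to constructing a modified, balanced $k$-page embedding of $K_{k+1,\lfloor(k+1)^2/4\rfloor}$; that construction is the part of the upper bound you cannot omit. Your final limit computation is fine once both bounds are in hand.
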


To grasp how this result relates to
the bound in Theorem~\ref{thm:fromsssv}, let us note that the
corresponding estimate (lower bound) from Theorem~\ref{thm:fromsssv} is
$\lim_{k\to\infty}\bigl(\lim_{n\to\infty} \nu_k(K_{k+1,n})/(2n^2/k^2)\bigr) \ge
1/4$. Theorem~\ref{thm:main2} gives the exact asymptotic value of this quotient.

In a nutshell, the strategy to prove the lower bounds in Theorems~\ref{thm:main1}
and~\ref{thm:main2} is to establish lower bounds for $\nu_k(K_{k+1,n})$
obtained under the assumption
that $\nu_k(K_{k+1,s+1})$ cannot be $k$-page embedded (for some
integer $s:=s(k)$). These results put the burden of the proof of the lower
bounds in Theorems~\ref{thm:main1} and~\ref{thm:main2} in finding good
estimates of $s(k)$. For $k=3,4,5,6$ (Theorem~\ref{thm:main1}), these come from
Theorem~\ref{thm:exact1}, whereas for $k > 6$
(Theorem~\ref{thm:main2}) these are obtained
from~\cite{Enomoto}*{Theorem 5}, which gives general estimates for such
integers $s(k)$. The lower bounds for $\nu_k(K_{k+1,n})$ needed for
both Theorems~\ref{thm:main1} and~\ref{thm:main2} are established in
Section~\ref{sec:lower}.

In Section~\ref{sec:upper} we prove the upper bounds on
$\nu_k(K_{k+1,n})$ claimed in Theorems~\ref{thm:main1}
and~\ref{thm:main2}. To obtain these bounds, first we find a
particular kind of $k$-page embeddings of $K_{k+1,\Auxone}$, which we
call {\em balanced} embeddings.  These embeddings are inspired by,
although not equal to, the embeddings described by Enomoto et
al.~in~\cite{Enomoto}. We finally use these embeddings to construct
drawings of $\nu_k(K_{k+1,n})$ with the required number of crossings.

Using the lower and upper bounds derived in Sections~\ref{sec:lower}
and~\ref{sec:upper}, respectively,
Theorems~\ref{thm:main1} and~\ref{thm:main2} follow easily;
their proofs are given in Section~\ref{sec:proofs}.

\subsection{General upper bounds for $\nu_k(K_{m,n})$}

As we mentioned above, we found no general upper
bounds for $\nu_k(K_{m,n})$ in the literature. We came across a rather
natural
way of drawing $K_{m,n}$ in $k$ pages, that yields the general upper
bound given in the following statement.


\begin{theorem}\label{thm:upp1}
Let $k, m,n$ be nonnegative integers. Let $r:=m$ mod $k$ and $s:=n$
mod $k$. Then
\[
\nu_k(K_{m,n})\le 
\frac{(m-r)(n-s)}{4k^2}(m-k+r)(n-k+s)
\le  \frac{1}{k^2} {m\choose 2}{n\choose 2}.
\]
\end{theorem}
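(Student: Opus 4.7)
The plan is to exhibit an explicit $k$-page drawing of $K_{m,n}$ achieving the first (stronger) bound, and then to verify the easier second inequality by direct arithmetic. Write $m_+:=\lfloor m/k\rfloor$ and partition the $m$-side $M$ into $k$ nearly-equal blocks $M_0,\ldots,M_{k-1}$ with $r$ blocks of size $m_++1$ and $k-r$ of size $m_+$, and analogously $N=N_0\cup\cdots\cup N_{k-1}$ using $s$. Place the $m+n$ vertices on the spine in the interleaved order
\[
M_0,\ N_0,\ M_1,\ N_1,\ \ldots,\ M_{k-1},\ N_{k-1},
\]
each block contiguous, and assign every edge between $u\in M_i$ and $v\in N_j$ to page $(i+j)\bmod k$.

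The heart of the argument is to check that on each page $p$ the $k$ complete bipartite subgraphs $K_{|M_i|,|N_j|}$ (one per pair $(i,j)$ with $i+j\equiv p\pmod k$) contribute no mutual crossings. The cleanest way to see this is via the circular model: bending the spine into a circle of circumference $2k$ places $M_i$ at position $2i$ and $N_j$ at position $2j+1$, so the chord of pair $(i,j)$ has endpoint-sum $2i+2j+1$. Any two pairs on page $p$ therefore yield chords whose endpoint-sums agree modulo $2k$; drawn as straight chords of the circle, such chords are parallel in the Euclidean sense and hence disjoint, so their endpoints do not interleave on the spine and the corresponding edges do not cross. (Equivalently, one may check directly that the spine-intervals of any two pairs on a common page are nested or disjoint.)

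With inter-pair crossings on every page ruled out, within each pair $(M_i,N_j)$ the drawing is a one-page $K_{|M_i|,|N_j|}$ between two disjoint contiguous blocks, in which every 4-tuple $(u_1<u_2\in M_i,\ v_1<v_2\in N_j)$ contributes exactly one crossing. Summing,
\[
\nu_k(K_{m,n})\ \le\ \sum_{i,j}\binom{|M_i|}{2}\binom{|N_j|}{2}
\ =\ \Biggl(\sum_i\binom{|M_i|}{2}\Biggr)\Biggl(\sum_j\binom{|N_j|}{2}\Biggr),
\]
and the balanced block sizes give $\sum_i\binom{|M_i|}{2}=r\binom{m_++1}{2}+(k-r)\binom{m_+}{2}=(m-r)(m-k+r)/(2k)$; combining with the analogous formula on the $n$-side proves the first inequality. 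For the second inequality, it suffices to check factor-by-factor that $(m-r)(m-k+r)\le m(m-1)$, equivalently $m(k-1)\ge r(k-r)$, which is immediate from $r(k-r)\le k^2/4\le k(k-1)\le m(k-1)$ when $m\ge k\ge 2$, trivial when $m<k$ (as then $r=m$ makes $m-r=0$), and an identity when $k=1$. The main obstacle is the geometric insight underlying the non-interaction claim; the rest is routine block-size arithmetic.
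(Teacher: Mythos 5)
Your proposal is correct and is essentially the paper's own proof: the same partition of each side into $k$ nearly-equal blocks, the same alternating placement $B_0,W_0,\ldots,B_{k-1},W_{k-1}$ on the circle, the same page rule $i+j\equiv p \pmod k$, and the same crossing count $\bigl(\sum_i\binom{|M_i|}{2}\bigr)\bigl(\sum_j\binom{|N_j|}{2}\bigr)=(m-r)(n-s)(m-k+r)(n-k+s)/(4k^2)$. The only difference is that you explicitly justify, via the parallel-chords observation, the non-interaction of distinct block pairs on a common page, which the paper subsumes under ``a straightforward calculation.''
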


The proof of this statement is given in Section~\ref{sec:cons}.

\subsection{$k$-page vs.~$(k/2)$-planar crossing numbers}

As we have already observed, for every even integer $k$, every
$k$-page drawing can be regarded as a $(k/2)$-planar drawing. Thus,
for every graph $G$, $\ucr_{k/2}(G) \le \nu_k(G)$.

Since there is (at least in principle) considerable more freedom in a
$(k/2)$-planar drawing than in a $k$-page drawing, it is natural to
ask whether or not this additional freedom can be translated into
a substantial saving in the number of crossings. For small values of
$m$ or $n$, the answer is yes. Indeed, Beineke~\cite{beineke} described how to
draw $K_{k+1,k(k-1)}$ in $k/2$ planes without crossings, but
by Proposition~\ref{pro:extw},
$K_{k+1,k^2/4+500k^{7/4}}$ cannot be $k$-page embedded; thus  the
$k/2$-planar crossing number of $K_{k+1,k(k-1)}$ is $0$, whereas its
$k$-page crossing number can be arbitrarily large. Thus it makes sense
to ask about the asymptotic behaviour when $k,m$, and $n$ all go to infinity.
Letting $\gamma(k):=
\lim_{m,n\to\infty} \ucr_{k/2}(K_{m,n})/\nu_k(K_{m,n})$, we focus on
the question: is $\lim_{k\to\infty} \gamma(k) = 1$?

Since we do not know (even asymptotically) the $(k/2)$-planar or the $k$-page
crossing number of $K_{m,n}$, we can only investigate this question
in the light of the current best bounds available.

In Section~\ref{sec:conrem} we present a discussion around this
question. We conclude that if the $(k/2)$-planar and the $k$-page
crossing numbers (asymptotically) agree with the current best upper
bounds, then indeed the limit above equals $1$.
We also observe that this is not the case for complete graphs: the
currently best known $(k/2)$-planar drawings of $K_n$ are substantially better
(even asymptotically) than the currently best known $k$-page drawings
of $K_n$.



\section{Exact pagenumbers: proof of Theorem~\ref{thm:exact1}}\label{sec:pe1}

We start by observing that for every integer $n$, the graph $K_{k+1,n}$
can be embedded in $k+1$ pages, and so the pagenumber
$p(K_{k+1,\Auxone+1})$ of
$K_{k+1,\Auxone+1}$ is at most $k+1$.  Thus we need to show the reverse
inequality
$p(K_{k+1,\Auxone+1}) \ge k+1$, for every $k\in \{3,4,5,6\}$.
It clearly suffices to show that
$\nu_k(K_{k+1,\Auxone+1}) >0 $, for every $k\in \{3,4,5,6\}$.

These inequalities are equivalent to $k$-colorability of certain auxiliary graphs. To this end, we define an auxiliary graph
$G_{D}(K_{m,n})$ associated with a $1$-page (circular) drawing $D$ of $K_{m,n}$ as follows. The vertices
of $G_{D}(K_{m,n})$ are the edges of $K_{m,n}$, and two vertices are adjacent if  the corresponding edges cross in the drawing $D$.

We immediately have the following result, that is essentially due to Buchheim and Zheng~\cite{Buccheim-Zheng}.

\begin{lemma}[cf.\ Buchheim-Zheng \cite{Buccheim-Zheng}]
\label{lemma:chromatic}
One has
$\nu_k(K_{m,n}) >0 $ if and only if the chromatic number of $G_{D}(K_{m,n})$ is greater than $k$ for all circular drawings $D$ of $K_{m,n}$.
\end{lemma}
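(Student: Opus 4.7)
My plan is to establish a bijective correspondence between $k$-page drawings of $K_{m,n}$ and pairs consisting of a circular drawing $D$ of $K_{m,n}$ together with an assignment of each edge of $K_{m,n}$ to one of $k$ pages. Under this correspondence, crossings in the $k$-page drawing translate into monochromatic edges of $G_D(K_{m,n})$, so the statement becomes a direct translation: a crossing-free $k$-page drawing exists exactly when some $G_D(K_{m,n})$ admits a proper $k$-coloring.

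To make this precise, I would invoke the circular model described in the introduction. A $k$-page drawing of $K_{m,n}$ is specified (up to homeomorphism) by (i) the cyclic order in which the $m+n$ vertices occur along the spine, and (ii) an assignment of each edge to one of the $k$ pages. Item (i) alone is exactly the data of a circular drawing $D$ of $K_{m,n}$, namely the $1$-page drawing in which all edges sit on a single page with that cyclic order. Two edges on distinct pages cannot cross, since distinct pages are disjoint half-planes meeting only along the spine. Two edges on the same page $i$, however, have their endpoints on the boundary line of that half-plane in the same cyclic order as in $D$, and two arcs in a half-plane with prescribed spine endpoints cross if and only if those endpoints interleave, exactly as in the circular model. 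Hence two edges $e_1,e_2$ cross in the $k$-page drawing if and only if they are assigned to the same page and are adjacent in $G_{D}(K_{m,n})$.

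Now interpret the page assignment as a map $c\colon V(G_{D}(K_{m,n}))\to\{1,\dots,k\}$, i.e.\ as a (not necessarily proper) $k$-coloring of $G_{D}(K_{m,n})$. By the previous paragraph, the total number of crossings in the $k$-page drawing equals the number of monochromatic edges of $G_{D}(K_{m,n})$ under $c$. In particular, the drawing is crossing-free precisely when $c$ is a proper $k$-coloring of $G_{D}(K_{m,n})$. Therefore $\nu_k(K_{m,n})=0$ if and only if there exists a circular drawing $D$ of $K_{m,n}$ with $\chi(G_{D}(K_{m,n}))\le k$. Taking contrapositives yields the claimed equivalence $\nu_k(K_{m,n})>0\iff \chi(G_{D}(K_{m,n}))>k$ for every circular drawing $D$.

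The only genuinely substantive point is the fact that the precise routing of an edge inside its page is irrelevant for counting crossings: within a fixed half-plane, two simple arcs with specified endpoints on the bounding line must cross, or not, according purely to whether those endpoints interleave along the spine. I expect this to be the only step requiring a brief justification, and it is a standard observation underlying the equivalence between the half-plane model and the circular model of book drawings; everything else is bookkeeping.
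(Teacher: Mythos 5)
Your proof is correct and follows exactly the argument the paper treats as immediate: identifying a $k$-page drawing with a circular drawing $D$ plus a page assignment, and observing that crossings correspond to monochromatic edges of $G_D(K_{m,n})$, so crossing-free drawings correspond to proper $k$-colorings. The paper states the lemma without proof (citing Buchheim--Zheng), and your write-up is the standard expansion of that same correspondence, with the interleaving criterion correctly flagged as the one point needing justification.
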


As a consequence we may decide if $\nu_k(K_{m,n}) >0 $ by considering all possible circular drawings $D$ of $K_{m,n}$, and computing the chromatic
numbers of the associated auxiliary graphs $G_{D}(K_{m,n})$.
The number of distinct circular drawings of $K_{m,n}$ may be computed using the classical {\em orbit counting lemma},  often attributed to
Burnside, although it was certainly already known to Frobenius.

\begin{lemma}[Orbit counting lemma]
Let a finite group $ \mathcal{G}$ act on a finite set $ \Omega.$ Denote by
$ \Omega^g,$ for $ g\in \mathcal{G},$ the set of elements of $ \Omega$ fixed by $ g.$
Then the number $ N$ of orbits of $ \mathcal{G}$ on $ \Omega$ is the average, over $ \mathcal{G},$
of $ |\Omega^g|,$ i.e.
$$ N=\frac{1}{|\mathcal{G}|}\sum_{g\in \mathcal{G}} |\Omega^g|.$$
\end{lemma}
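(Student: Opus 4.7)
The plan is to prove this classical identity by a double-counting argument applied to the set of fixed-point incidences
\[
S := \{(g,x) \in \mathcal{G} \times \Omega : g \cdot x = x\}.
\]
First I would count $|S|$ by summing over $g$ first, which gives $|S| = \sum_{g \in \mathcal{G}} |\Omega^g|$; this is precisely $|\mathcal{G}|$ times the right-hand side of the claimed identity.

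Next, I would count $|S|$ by summing over $x$ first, obtaining $|S| = \sum_{x \in \Omega} |\mathcal{G}_x|$, where $\mathcal{G}_x := \{g \in \mathcal{G} : g \cdot x = x\}$ is the stabilizer of $x$. At this step I would invoke the orbit–stabilizer theorem, which yields $|\mathcal{G}_x| = |\mathcal{G}|/|\mathcal{G} \cdot x|$, with $\mathcal{G} \cdot x$ denoting the orbit of $x$.

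The key step is then to regroup the sum $\sum_{x \in \Omega} |\mathcal{G}|/|\mathcal{G} \cdot x|$ according to the orbit partition of $\Omega$. For a single orbit $\mathcal{O}$, each of its $|\mathcal{O}|$ elements contributes $|\mathcal{G}|/|\mathcal{O}|$, so $\mathcal{O}$ contributes $|\mathcal{G}|$ in total. Summing over all $N$ orbits gives $N \cdot |\mathcal{G}|$. Equating the two evaluations of $|S|$ and dividing through by $|\mathcal{G}|$ produces the stated identity.

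The whole argument is essentially bookkeeping, and the only nontrivial input is the orbit–stabilizer theorem, which is itself a routine coset-counting exercise. There is no real obstacle; the only care required is to notice that the partition of $\Omega$ into orbits is precisely what turns the stabilizer sum into $N \cdot |\mathcal{G}|$.
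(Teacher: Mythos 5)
Your double-counting argument is correct and complete: counting the incidence set $S$ both ways, applying the orbit--stabilizer theorem, and regrouping the stabilizer sum by orbits is the standard proof of this lemma, and every step you describe goes through. The paper itself offers no proof at all --- it states the orbit counting lemma as a classical result (attributed to Burnside/Frobenius) and simply applies it --- so there is nothing to compare against; your write-up supplies exactly the routine argument the authors are implicitly relying on.
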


We will apply this lemma by considering that a circular drawing of $K_{m,n}$ is uniquely determined by
the ordering of the $m$ blue and $n$ red vertices on a circle.
We therefore define the finite set $\Omega$ as the set of all ${m+n \choose n}$ such orderings.
Now consider the usual action of the dihedral group $\mathcal{G} := D_{m+n}$ on the set $\Omega$.
For our purposes two orderings are the same, i.e.\ correspond to the same circular drawing of $K_{m,n}$, if they belong to the same orbit of $\mathcal{G}$.
We therefore only need to count the number of orbits by using the last lemma. The final result is as follows. (We omit the details of the counting argument, as it is a straightforward exercise in
combinatorics.)

\begin{lemma}
\label{lemma:number of circular drawings of Kmn}
Let $m$ and $n$ be positive integers and denote $d = \mbox{gcd}(m,n)$.
The number  of distinct circular drawings of $K_{m,n}$ equals:
\[
\frac{1}{2(m+n)}\left\{
\begin{array}{ll}
 \frac{m+n}{2}\left(\dbinom{\frac{m+n}{2}}{n/2} + \dbinom{\frac{m+n-2}{2}}{m/2} + \dbinom{\frac{m+n-2}{2}}{n/2}\right) + \displaystyle\sum_{k=0}^{d-1} \dbinom{\frac{m+n}{o(k)}}{\frac{m}{o(k)}}
& \mbox{\rm ($m$, $n$ even),} \\

 (m+n)\dbinom{\frac{m+n-1}{2}}{n/2} + \displaystyle\sum_{k=0}^{d-1} \dbinom{\frac{m+n}{o(k)}}{\frac{m}{o(k)}}
& \mbox{\rm ($m$ odd, $n$ even),} \\

 (m+n)\dbinom{\frac{m+n-2}{2}}{(m-1)/2} + \displaystyle\sum_{k=0}^{d-1} \dbinom{\frac{m+n}{o(k)}}{\frac{m}{o(k)}}
& \mbox{\rm ($m$, $n$ odd),} \\
\end{array}
\right.
\]
where $o(k)$ is the minimal number between 1 and $d$ such that $k\cdot
o(k)\equiv 0\mod d$.
In other words, $o(k)$ is the order of the subgroup generated by $k$ in the additive group of integers $\mod d$.
\end{lemma}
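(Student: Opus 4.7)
The plan is to apply the orbit counting lemma to the action of $\mathcal{G} = D_{m+n}$ on $\Omega$, the set of all $\binom{m+n}{n}$ cyclic arrangements of $m$ blue and $n$ red vertices (equivalently, $2$-colourings of $\{0, \ldots, m+n-1\}$ with $n$ red positions). The $2(m+n)$ group elements split naturally into $m+n$ rotations and $m+n$ reflections; I would compute $|\Omega^g|$ for each type, sum, and divide by $|\mathcal{G}|$. The two-part shape of the final formula---one number-theoretic sum plus one closed-form expression---exactly reflects this rotation/reflection split.

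For the rotational contribution, I would observe that the rotation $r_j$ by $j$ positions acts on positions with $\delta_j := \gcd(j, m+n)$ orbits of length $(m+n)/\delta_j$; a colouring is fixed iff each orbit is monochromatic, forcing $(m+n)/\delta_j$ to divide both $n$ and $m$, hence to divide $d := \gcd(m,n)$. Consequently the admissible values of $\delta_j$ are exactly the numbers $(m+n)e/d$ with $e \mid d$; for each such $\delta$ the number of $j$'s is $\phi(d/e)$, and each such rotation has $\binom{(m+n)e/d}{me/d}$ fixed colourings. The resulting divisor-indexed sum can then be reindexed: group $k \in \{0,\ldots,d-1\}$ by $\gcd(k,d)$ and recognise $o(k) = d/\gcd(k,d)$ for $k>0$, with $o(0)=1$ absorbing the $e=d$ term, converting the sum into $\sum_{k=0}^{d-1}\binom{(m+n)/o(k)}{m/o(k)}$ as in the statement.

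For the reflectional contribution, I would perform a case analysis on the parities of $m$ and $n$. If $m+n$ is odd, each of the $m+n$ reflections fixes one vertex and pairs the remaining positions into $(m+n-1)/2$ transpositions; the parity of the pair contribution forces the fixed vertex to carry whichever colour matches the parity of $n$, giving the closed form $(m+n)\binom{(m+n-1)/2}{\lfloor n/2\rfloor}$. If $m+n$ is even, $(m+n)/2$ reflections fix an antipodal pair of vertices and $(m+n)/2$ fix the midpoints of an antipodal pair of edges, and the count splits further according to whether $m,n$ are both even or both odd. The both-even case reduces to three binomial terms via a short case analysis on the colours of the two fixed vertices (the one-red-one-blue subcase being vacuous by parity). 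The both-odd case forces the edge-midpoint reflections to contribute nothing and forces the two fixed vertices of each vertex-pair reflection to be coloured one red and one blue, introducing a factor of $2$ from the choice of which fixed vertex is red.

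Summing the rotational and reflectional contributions and dividing by $2(m+n)$ then yields the three cases of the lemma directly. The main obstacle is pure bookkeeping: no single step is conceptually difficult, but care is needed in (i) identifying which of the reflection subcases is nonempty for a given parity---in particular, remembering the factor of $2$ in the both-odd subcase coming from distinguishing the two fixed vertices---and (ii) converting the rotational divisor-sum indexed by $e \mid d$ into the sum indexed by $k \in \{0, \ldots, d-1\}$ via the identity $o(k) = d/\gcd(k,d)$, treating $k=0$ (the identity rotation) separately.
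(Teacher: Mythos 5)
Your proposal is correct and follows exactly the route the paper prescribes: applying the orbit counting lemma to the action of $D_{m+n}$ on the $\binom{m+n}{n}$ orderings, with the rotations producing the $\sum_{k=0}^{d-1}\binom{(m+n)/o(k)}{m/o(k)}$ term and the reflections producing the parity-dependent closed forms (the paper itself omits these details as ``a straightforward exercise in combinatorics''). All the delicate points check out, including the factor of $2$ in the both-odd vertex-axis subcase and the reindexing of the divisor sum via $o(k)=d/\gcd(k,d)$.
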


In what follows we will present computer-assisted proofs that the chromatic number of $G_{D}(K_{m,n})$ is greater than $k$, for specific integers $k,m,n$.
We do not need to compute the chromatic number exactly if we can prove that it is lower bounded by a value strictly greater than $k$.
A suitable lower bound for our purposes is the Lov\'asz $\vartheta$-number.

\begin{lemma}[Lov\'asz \cite{Lovasztheta}]
Given a graph $G = (V,E)$ and the value
\[
\vartheta(G) := \max_{X \succeq 0} \left. \left\{\sum_{i,j \in V} X_{ij} \; \right| \; X_{ij} = 0 \mbox{ if } (i,j) \in E, \; \mbox{trace}(X) = 1, \; X \in \mathbb{R}^{V \times V}\right\},
\]
one has
\[
\omega(\bar G) \le \vartheta(G) \le \chi(\bar G),
\]
where $\omega(\bar G)$ and $\chi(\bar G)$ are the clique and chromatic numbers of the complement $\bar G$ of $G$, respectively.
\end{lemma}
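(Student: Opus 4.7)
The sandwich inequality splits into two halves requiring quite different SDP arguments. The lower bound $\omega(\bar G)\le\vartheta(G)$ is immediate from writing down the right primal feasible point, whereas the upper bound $\vartheta(G)\le\chi(\bar G)$ requires the Lagrangian dual of the SDP together with a dual certificate extracted from a proper coloring of $\bar G$.

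For the lower bound, my plan is to fix a maximum clique $S$ of $\bar G$ (equivalently, a maximum independent set of $G$), let $\mathbf{1}_S\in\mathbb{R}^V$ be its characteristic vector, and set $X:=|S|^{-1}\,\mathbf{1}_S\mathbf{1}_S^T$. This $X$ is PSD with trace $1$; since $S$ is independent in $G$, every edge of $G$ has at least one endpoint outside $S$, so the conditions $X_{ij}=0$ on $E$ hold automatically; and the objective evaluates to $|S|^2/|S|=|S|=\omega(\bar G)$.

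The upper bound is where the real work lies. The plan is to first derive the Lagrangian dual SDP (with $J$ denoting the all-ones matrix)
\[
\vartheta(G)=\min\{\lambda : \lambda I + Z \succeq J,\ Z=Z^T,\ Z_{ii}=0,\ Z_{ij}=0 \text{ whenever } \{i,j\}\notin E\},
\]
invoking strong duality via Slater's condition, which holds because $X=I/n$ is a strictly feasible primal point. Given a proper coloring of $\bar G$ with $k=\chi(\bar G)$ colors, its color classes form a partition of $V$ into cliques $C_1,\dots,C_k$ of $G$, and I would let $M:=\sum_{\ell=1}^{k} \mathbf{1}_{C_\ell}\mathbf{1}_{C_\ell}^T$ and $Z:=k(M-I)$. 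The partition property forces $M_{ii}=1$, while for $i\ne j$ the entry $M_{ij}$ equals $1$ only if $i$ and $j$ lie in a common $C_\ell$, which in turn forces $\{i,j\}\in E$; thus $Z$ has precisely the required support. A one-line Cauchy--Schwarz computation,
\[
x^T(kM)x=k\sum_{\ell}(\mathbf{1}_{C_\ell}^T x)^2 \ge \Bigl(\sum_\ell \mathbf{1}_{C_\ell}^T x\Bigr)^2=(\mathbf{1}^T x)^2=x^T Jx,
\]
shows $\lambda I+Z=kM\succeq J$, so $(\lambda,Z)=(k,k(M-I))$ is dual feasible with objective value $k$, and weak duality closes the argument. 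The main obstacle, as anticipated, is spotting the rank-$k$ dual witness $M$: once it is written down both the support condition on $Z$ and the PSD inequality $kM\succeq J$ fall out of the partition structure and Cauchy--Schwarz, but the crucial insight is that clique covers of $G$ are exactly the structures whose outer-product sums have off-diagonal support on $E$, making them canonical dual witnesses for this SDP.
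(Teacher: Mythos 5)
Your proof is correct. The paper itself offers no proof of this lemma --- it is stated as a known result and attributed to Lov\'asz \cite{Lovasztheta} --- so there is no in-paper argument to compare against. Your two halves are the standard ones and both check out: the rank-one primal witness $|S|^{-1}\mathbf{1}_S\mathbf{1}_S^T$ built from a maximum independent set of $G$ gives $\omega(\bar G)\le\vartheta(G)$, and the clique-cover matrix $M=\sum_\ell \mathbf{1}_{C_\ell}\mathbf{1}_{C_\ell}^T$ with $kM\succeq J$ (via Cauchy--Schwarz) gives a dual feasible point of value $k=\chi(\bar G)$. One small remark: since you only invoke weak duality at the end (which holds unconditionally, as $\langle Z,X\rangle=0$ for any primal-feasible $X$ and any $Z$ supported on $E$), the appeal to Slater's condition and strong duality is superfluous for the inequality you need.
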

The  $\vartheta(G)$-number may be computed for a given graph $G$ by using semidefinite programming software.
For our computation we used the software DSDP \cite{dsdp}.
\begin{corollary}
\label{cor:theta}
If, for given positive integers $m,n$ and $k$, $\vartheta(\overline{G_{D}(K_{m,n})}) > k$ for all circular drawings $D$ of $K_{m,n}$, then
$\nu_k(K_{m,n}) >0 $.
\end{corollary}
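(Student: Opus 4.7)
The plan is to chain together Lemma~\ref{lemma:chromatic} (which converts the statement $\nu_k(K_{m,n})>0$ into a chromatic-number assertion about the auxiliary graphs $G_D(K_{m,n})$) with the Lov\'asz sandwich bound, applied with a judicious choice of graph. Concretely, I would first fix an arbitrary circular drawing $D$ of $K_{m,n}$, and apply the inequality $\vartheta(G)\le\chi(\bar G)$ with $G:=\overline{G_D(K_{m,n})}$, so that $\bar G=G_D(K_{m,n})$. This yields
\[
\vartheta(\overline{G_D(K_{m,n})})\le \chi(G_D(K_{m,n})).
\]
Combined with the hypothesis $\vartheta(\overline{G_D(K_{m,n})})>k$, this gives $\chi(G_D(K_{m,n}))>k$.

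Next I would observe that since $D$ was arbitrary, the inequality $\chi(G_D(K_{m,n}))>k$ holds for every circular drawing $D$ of $K_{m,n}$. Lemma~\ref{lemma:chromatic} then immediately yields $\nu_k(K_{m,n})>0$, completing the proof.

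The only point that requires any care is keeping track of which graph is complemented in the Lov\'asz sandwich bound $\omega(\bar G)\le \vartheta(G)\le \chi(\bar G)$: one must feed the theorem the complement of the graph whose chromatic number one wishes to bound from below, which is why the hypothesis of the corollary is phrased in terms of $\vartheta(\overline{G_D(K_{m,n})})$ rather than $\vartheta(G_D(K_{m,n}))$. There is no substantive obstacle — the corollary is a direct two-line consequence of Lemma~\ref{lemma:chromatic} and the Lov\'asz theorem, and its role is simply to make the hypothesis efficiently checkable by semidefinite programming, since $\vartheta$ is computable in polynomial time whereas $\chi$ is not.
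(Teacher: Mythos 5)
Your argument is correct and is precisely the one the paper intends: apply the sandwich inequality $\vartheta(G)\le\chi(\bar G)$ with $G=\overline{G_D(K_{m,n})}$ to get $\chi(G_D(K_{m,n}))>k$ for every circular drawing $D$, then invoke Lemma~\ref{lemma:chromatic}. The paper leaves this two-line deduction implicit, and your care about which graph is complemented matches the way the hypothesis is stated.
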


If, for a given circular drawing $D$, we find that
$\vartheta(\overline{G_{D}(K_{m,n})}) = k$, then we  compute the
chromatic number of $G_{D}(K_{m,n})$ exactly,
by using satisfiability or integer programming software.
For our computation we used the satisfiability solver Akmaxsat \cite{akmaxsat}, and for the integer programming formulation the solver XPRESS-MP \cite{xpressmp}.
The formulation of the chromatic number as the solution of a maximum satisfiability problem is described in \cite[\S 3.3]{MR}.
The integer programming formulation we used is the following.

For given $G = (V,E)$ with adjacency matrix $A$, and set of colors $C = \{1,\ldots,k\}$, define the binary variables
\[
x_{ij} = \left\{
\begin{array}{ll}
1 & \mbox{if vertex $i$ is assigned color $j$}, \\
0 & \mbox{else,} \\
\end{array}
\right. \quad (i \in V, \; j \in C),
\]
and consider the integer programming feasibility problem:
\begin{equation}
\label{eq:mip formulation}
\mbox{ Find an } x \in \{0,1\}^{V \times C} \mbox{ such that }  \sum_{j \in C} x_{ij} = 1 \; \forall  i \in V, \; \sum_{i \in V} A_{pi}x_{ij} \le |E|(1-x_{pj}) \; \forall p \in V, j \in C.
\end{equation}

\begin{lemma}
\label{lemma:ip}
A given graph $G = (V,E)$ is $k$-colorable if and only if the integer program \eqref{eq:mip formulation} has a solution.
\end{lemma}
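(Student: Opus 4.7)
The plan is to prove both directions of the equivalence directly, interpreting the binary variable $x_{ij}$ as the indicator that vertex $i$ receives color $j$.

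For the forward direction, I would start with a proper $k$-coloring $c : V \to C$ of $G$ and define $x_{ij} := 1$ if $c(i)=j$ and $x_{ij}:=0$ otherwise. The constraint $\sum_{j \in C} x_{ij} = 1$ is immediate from the fact that $c$ assigns each vertex exactly one color. For the second family of constraints, I would split into cases on $x_{pj}$. If $x_{pj}=0$, then the right-hand side is $|E|$, and the left-hand side $\sum_{i \in V} A_{pi} x_{ij}$ is at most the degree of $p$, hence at most $|E|$, so the inequality holds trivially. If $x_{pj}=1$, the right-hand side is $0$, and the inequality forces $\sum_{i \in V} A_{pi} x_{ij} = 0$; this is exactly the statement that no neighbor of $p$ receives color $j$, which is guaranteed because $c$ is a proper coloring.

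For the reverse direction, given a feasible $x \in \{0,1\}^{V \times C}$, the constraint $\sum_{j \in C} x_{ij} = 1$ lets me define a map $c : V \to C$ unambiguously by $c(i) = j$ where $j$ is the unique color with $x_{ij}=1$. I then need to check that $c$ is a proper coloring, i.e.\ that $c(p) \neq c(q)$ whenever $(p,q) \in E$. Suppose for contradiction that $c(p) = c(q) = j$ for some edge $(p,q)$. Then $x_{pj} = x_{qj} = 1$ and $A_{pq} = 1$, so the second constraint for this $p$ and $j$ gives
\[
1 \le A_{pq} x_{qj} \le \sum_{i \in V} A_{pi} x_{ij} \le |E|(1 - x_{pj}) = 0,
\]
a contradiction.

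There is essentially no obstacle here; the lemma is a standard encoding result and the only mildly delicate point is checking that the constant $|E|$ on the right-hand side is indeed a valid ``big-$M$'' (i.e.\ at least as large as any possible value of the left-hand side when $x_{pj}=0$), which follows because $\sum_{i \in V} A_{pi} x_{ij}$ is bounded by the degree of $p$, hence by $|E|$.
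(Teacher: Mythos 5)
Your proof is correct. The paper states this lemma without proof, treating it as a standard big-$M$ encoding fact, and your argument (with the case split on $x_{pj}$ for the forward direction and the contradiction via a monochromatic edge for the reverse) is exactly the routine verification the authors evidently had in mind, including the correct observation that $|E|$ is a valid big-$M$ constant since $\sum_{i \in V} A_{pi}x_{ij} \le \deg(p) \le |E|$.
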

We may therefore solve \eqref{eq:mip formulation} with $G =
G_{D}(K_{m,n})$, for each circular drawing $D$ of $K_{m,n}$, to decide if $\nu_k(K_{m,n}) >0 $.

Finally we describe the results we obtained by using the computational framework described in this section.

\subsubsection*{Case $k=3$: proof of $\nu_3(K_{4,5}) >0$.}
By Lemma \ref{lemma:number of circular drawings of Kmn}, there are $10$ distinct circular drawings $D$ of $K_{4,5}$.
For each $D$ we showed numerically that
$\vartheta(\overline{G_{D}(K_{4,5})}) > 3$.
The required result now follows from Corollary \ref{cor:theta}.

\subsubsection*{Case $k=4$: proof of $\nu_4(K_{5,7}) >0$.}
By Lemma \ref{lemma:number of circular drawings of Kmn}, there are $38$ distinct circular drawings $D$ of $K_{5,7}$.
For all but one $D$ we showed numerically that
$\vartheta(\overline{G_{D}(K_{5,7})}) > 4$. The remaining case was settled by showing
$\chi({G_{D}(K_{5,7})}) > 4$ using the satisfiability reformulation.
The required result now follows from Corollary \ref{cor:theta} and Lemma \ref{lemma:chromatic}.

\subsubsection*{Case $k=5$: proof of $\nu_5(K_{6,10}) >0$.}
By Lemma \ref{lemma:number of circular drawings of Kmn}, there are $210$ distinct circular drawings $D$ of $K_{6,10}$.
For all but one $D$ we showed numerically that
$\vartheta(\overline{G_{D}(K_{6,10})}) > 5$. The remaining case was settled by showing
$\chi({G_{D}(K_{6,10})}) > 5$ using the satisfiability reformulation.
The required result now follows from Corollary \ref{cor:theta} and Lemma \ref{lemma:chromatic}.

\subsubsection*{Case $k=6$: proof of $\nu_6(K_{7,13}) >0$.}
By Lemma \ref{lemma:number of circular drawings of Kmn}, there are $1980$ distinct circular drawings $D$ of $K_{7,13}$.
For all but one $D$ we showed numerically that
$\vartheta(\overline{G_{D}(K_{7,13})}) > 6$. The remaining case was settled by showing
$\chi({G_{D}(K_{7,13})}) > 4$ using the integer programming reformulation \eqref{eq:mip formulation}.
The required result now follows from Corollary \ref{cor:theta}, Lemma \ref{lemma:ip}, and Lemma \ref{lemma:chromatic}.

\section{$k$-page crossing numbers of $K_{k+1,n}$: lower bounds}\label{sec:lower}

Our aim in this section is to establish lower bounds for
$\nu_k(K_{k+1,n})$. Our strategy is as follows. First we find
(Proposition~\ref{pro:low1}) a
lower bound under the assumption that $K_{k+1,s+1}$ cannot be $k$-page
embedded (for some integer $s:=s(k)$).
Then we find values of $s$ such that $K_{k+1,s+1}$ cannot be
$k$-page embedded; these are given in Propositions~\ref{pro:exon} (for
$k\in\{2,3,4,5,6\}$) and~\ref{pro:extw} (for every $k$). We then put
these results together and establish the lower bounds required in
Theorem~\ref{thm:main1} (see Lemma~\ref{lem:exon}) and in
Theorem~\ref{thm:main2} (see Lemma~\ref{lem:exth}).

\begin{proposition}\label{pro:low1}
Suppose that $K_{k+1,s+1}$ cannot be $k$-page embedded. Let $n$ be a
positive integer, and define $q:=n$ mod $s$.
Then
\[
\nu_k(K_{k+1,n}) \ge
q\cdot \binom{\frac{n-q}{s}+1}{2} + (s-q)\cdot \binom{\frac{n-q}{s}}{2}.
\]
\end{proposition}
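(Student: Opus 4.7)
Fix an optimal $k$-page drawing $D$ of $K_{k+1,n}$, and let $V$ and $W$ be the parts with $|V|=k+1$ and $|W|=n$. For each pair $\{v,w\}\subseteq W$, let $c_{vw}$ denote the number of crossings of $D$ between an edge incident to $v$ and an edge incident to $w$; equivalently, $c_{vw}$ is the number of crossings in the induced $k$-page drawing of $K_{k+1,\{v,w\}}\cong K_{k+1,2}$ on vertex set $V\cup\{v,w\}$. Since two crossing edges of a bipartite graph have four distinct endpoints, each crossing of $D$ is accounted for by exactly one such pair, and hence $\nu_k(K_{k+1,n})=\sum_{\{v,w\}\subseteq W} c_{vw}$.

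The approach is to reduce to Tur\'an's theorem via an auxiliary graph $H$ on vertex set $W$: declare $vw$ an edge of $H$ iff $c_{vw}=0$. The crucial observation is that $H$ is $K_{s+1}$-free, since a clique $S\subseteq W$ of size $s+1$ in $H$ would force $c_{vw}=0$ for every $\{v,w\}\subseteq S$, making the induced sub-drawing of $D$ on $V\cup S$ a crossing-free $k$-page drawing of $K_{k+1,s+1}$, contradicting the hypothesis.

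Now invoke Tur\'an's theorem. Writing $t:=(n-q)/s$, so that $n=st+q$, the maximum number of edges in a $K_{s+1}$-free graph on $n$ vertices is attained by the complete balanced $s$-partite Tur\'an graph with $q$ parts of size $t+1$ and $s-q$ parts of size $t$; equivalently, the minimum number of non-edges in such a graph equals the number of intra-part pairs of that Tur\'an graph. Hence $H$ has at least $q\binom{t+1}{2}+(s-q)\binom{t}{2}$ non-edges. Since each $c_{vw}$ is a nonnegative integer, every non-edge of $H$ contributes at least $1$ to $\sum_{\{v,w\}\subseteq W} c_{vw}$, and we conclude
\[
\nu_k(K_{k+1,n})\;\ge\;q\binom{\tfrac{n-q}{s}+1}{2}+(s-q)\binom{\tfrac{n-q}{s}}{2},
\]
as required. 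The only substantive conceptual step is recognising that the hypothesis translates exactly into the $K_{s+1}$-freeness of $H$, after which the Tur\'an bound on non-edges does all the work; everything else is routine bookkeeping.
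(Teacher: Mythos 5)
Your proof is correct and follows essentially the same route as the paper: the auxiliary graph $H$ you build on the $n$-side (pairs with no mutual crossings) is exactly the complement of the graph $G$ the authors use (pairs with at least one mutual crossing), and both arguments then apply Tur\'an's theorem to the $K_{s+1}$-free graph and count one crossing per intra-part pair of the Tur\'an partition. The only differences are cosmetic (working with non-edges of $H$ versus edges of $G$, and making the crossing decomposition $\sum_{\{v,w\}} c_{vw}$ explicit).
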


\begin{proof}
It is readily verified that if $n\le s$ then the right hand side of
the inequality in the proposition equals $0$,  and so in this case the
inequality trivially holds. Thus we may assume that $n \ge s+1$.

Let $D$ be a $k$-page drawing of $K_{k+1,n}$. Construct an auxiliary
graph $G$ as follows. Let $V(G)$ be the set of
$n$ degree-$(k+1)$ vertices in $K_{k+1,n}$, and join two vertices
$u,v$ in $G$
by an edge if there are edges $e_u, e_v$ incident with $u$ and $v$
(respectively) that cross each other in $D$.

Since $K_{k+1,s+1}$ cannot be embedded in $k$ pages, it follows that
$G$ has no independent set of size $s+1$. Equivalently, the complement
graph $\overline{G}$ of $G$ has no clique of size $s+1$. Tur\'an's theorem
asserts that $\overline{G}$ cannot have more edges than the Tur\'an graph
$T(n,s)$, and so $G$ has at least as many edges as the complement
$\overline{T}(n,s)$ of $T(n,s)$. We recall that $\overline{T}(n,s)$
is formed by the
disjoint union of $s$ cliques, $q$ of them with $(n-q)/s +1$ vertices,
and $s-q$ of them with $(n-q)/s$ vertices. Thus
\[
|E(G)| \ge
q\cdot \binom{\frac{n-q}{s}+1}{2} + (s-q)\cdot \binom{\frac{n-q}{s}}{2}.
\]

Since clearly the number of crossings in $D$ is at least $|E(G)|$, and $D$ is an arbitrary $k$-page drawing of $K_{k+1,n}$, the result follows.
\end{proof}

\begin{proposition}\label{pro:exon}
For each $k\in\{2,3,4,5,6\}$, $K_{k+1,\auxone+1}$ cannot be $k$-page embedded.
\end{proposition}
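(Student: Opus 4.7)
The plan is to deduce Proposition~\ref{pro:exon} directly from the computational framework developed in Section~\ref{sec:pe1}. The first step is to observe that ``$K_{k+1, \auxone+1}$ cannot be $k$-page embedded'' is equivalent to $\nu_k(K_{k+1, \auxone+1}) > 0$, since a $k$-page embedding is by definition a crossing-free $k$-page drawing. This is exactly the inequality already established case-by-case in Section~\ref{sec:pe1} during the proof of Theorem~\ref{thm:exact1}, so the proposition is essentially a restatement of that computation.

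For $k=2$ the claim reduces to $\nu_2(K_{3,3}) > 0$, which is simply the nonplanarity of $K_{3,3}$ (any $2$-page embedding yields a planar embedding).

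For each $k \in \{3,4,5,6\}$, I would proceed as follows. First, apply Lemma~\ref{lemma:chromatic} to recast positivity of $\nu_k(K_{k+1, \auxone+1})$ as the requirement that $\chi(G_D(K_{k+1, \auxone+1})) > k$ for every circular drawing $D$. Next, invoke Lemma~\ref{lemma:number of circular drawings of Kmn} to enumerate the finite list of inequivalent circular drawings; this yields $10$, $38$, $210$, and $1980$ drawings for $k = 3, 4, 5, 6$ respectively. For each drawing $D$, compute the Lov\'asz theta number $\vartheta(\overline{G_D})$ via an SDP solver and invoke Corollary~\ref{cor:theta} whenever the computed value strictly exceeds $k$. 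For the handful of drawings on which the theta bound is insufficient, I would fall back to an exact $k$-colorability check via the satisfiability reformulation or the integer program~\eqref{eq:mip formulation}, and apply Lemma~\ref{lemma:ip}.

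The main obstacle is ensuring rigor of the numerical step: $\vartheta$ is computed in floating-point arithmetic, so a computed value only marginally above $k$ cannot on its own be trusted, and each such borderline circular drawing must be discharged by a certified exact combinatorial solver. A secondary concern is scale --- for $k=6$ one must process $1980$ auxiliary graphs, each on $(k+1)(\auxone+1) = 91$ vertices --- but this is comfortably tractable with modern SAT, IP, and SDP solvers, so the overall effort reduces to the bookkeeping already laid out in the four case subsubsections of Section~\ref{sec:pe1}.
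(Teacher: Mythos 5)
Your proposal is correct and follows essentially the same route as the paper: the paper's own proof of Proposition~\ref{pro:exon} is a one-line reduction to Theorem~\ref{thm:exact1}, whose proof is precisely the computational scheme you describe (enumeration of circular drawings via Lemma~\ref{lemma:number of circular drawings of Kmn}, the Lov\'asz $\vartheta$ bound via Corollary~\ref{cor:theta}, and exact colorability checks for the borderline drawings). You have simply inlined the content of Theorem~\ref{thm:exact1} rather than citing it, and your counts of drawings and vertex numbers match the paper's.
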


\begin{proof}
This is an immediate consequence of Theorem~\ref{thm:exact1}.
\end{proof}

\begin{proposition}\label{pro:extw}
For each positive integer $k$, the graph $K_{k+1,k^2/4 + 500 k^{7/4}}$ cannot be $k$-page embedded.
\end{proposition}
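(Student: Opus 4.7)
The plan is to deduce this proposition directly from~\cite{Enomoto}*{Theorem 5}, which the authors have explicitly signalled as providing the ``general estimates for such integers $s(k)$.''  Their theorem gives a lower bound on $p(K_{m,n})$ that, after specialization to $m=k+1$, forces any $k$-page embedding of $K_{k+1,n}$ to satisfy an inequality of the form $n \le k^2/4 + (\text{lower-order correction in }k)$.  The whole content of the proposition is that this correction can be absorbed into $500\,k^{7/4}$ uniformly in $k$.

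First I would recall~\cite{Enomoto}*{Theorem 5} in the most convenient form: there exists an absolute constant $C>0$ such that whenever $K_{k+1,n}$ embeds in $k$ pages, one has
\[
n \;\le\; \frac{k^2}{4} + C\,k^{7/4}.
\]
To extract this, I would take their inequality as stated, isolate $n$, and majorize each of the resulting correction terms by a single constant multiple of $k^{7/4}$.  Second, I would verify that the constant $500$ appearing in the proposition is strictly larger than the $C$ so obtained, so that $n := k^2/4 + 500\,k^{7/4}$ strictly exceeds the threshold allowed by~\cite{Enomoto}*{Theorem 5}.  The desired non-embeddability of $K_{k+1,n}$ in $k$ pages is then immediate by contrapositive.

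The main obstacle is translating the algebraic form of Enomoto--Nakamigawa--Ota's bound (which is typically expressed implicitly in terms of binomial coefficients and page-count parameters, rather than as a clean upper bound on $n$) into the simple shape above with an explicit constant.  This reduces to a routine but careful asymptotic expansion around $n\sim k^2/4$, collecting sub-leading terms of orders $k^{3/2}$, $k$, $k^{1/2}$, etc., and checking that their sum is bounded by $500\,k^{7/4}$ for every $k\ge 1$.  For the finitely many small values of $k$ where the $k^{7/4}$ term is not yet dominant, one may either check the inequality by direct computation or invoke Proposition~\ref{pro:exon} (which covers $k\in\{2,3,4,5,6\}$ with the far sharper threshold $\lfloor(k+1)^2/4\rfloor+1$), so in the end only the regime of large $k$ requires genuine work, and there the generous constant $500$ provides ample slack.
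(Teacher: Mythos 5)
Your overall strategy coincides with the paper's: Proposition~\ref{pro:extw} is exactly an effective version of \cite{Enomoto}*{Theorem 5}, and the only mathematical content is replacing the $O(n^{7/4})$ error term by an explicit $500\,k^{7/4}$. You have also identified the correct direction of that theorem (an upper bound on $g(n):=\min\{m : p(K_{m,n})=n\}$, i.e.\ a lower bound on the pagenumber once the larger side reaches roughly $n^2/4$), and the final contrapositive/monotonicity step is fine.

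The gap is that the step you defer as ``routine'' is the entire proof, and the way you propose to carry it out would not succeed. Theorem 5 of \cite{Enomoto} \emph{as stated} asserts only $g(n)=n^2/4+O(n^{7/4})$; there is no inequality in the statement from which one can ``isolate $n$'' and read off a constant $C$, so no manipulation of the statement alone can certify $C<500$. Likewise, an ``asymptotic expansion around $n\sim k^2/4$'' controls only all sufficiently large $k$ with no effective threshold, and your fallback of checking the remaining small $k$ by direct computation is not available, since without an explicit constant you cannot say which $k$ are small (and Proposition~\ref{pro:exon} covers only $k\in\{2,\dots,6\}$, not $k=1$ nor an unknown initial range); note also that the dominant correction terms are themselves of order $n^{7/4}$, not of lower orders $k^{3/2},k,k^{1/2}$ as you suggest. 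What the paper actually does is open up the \emph{proof} of \cite{Enomoto}*{Theorem 5}: Enomoto et al.\ show $g(n)\le m_1+m_2+m_3$ with $m_1\le n^{3/4}(n-r)$, $m_2\le (n^{1/4}+1)(2n^{1/4}+2)(n-1)$ and $m_3\le (n^{1/4}+1)(n^{1/4}+2)(2n^{1/4}+3)(n-1)+r'(n-r')$, and the paper bounds these explicitly (including maximizing $(n^{3/4}+r')(n-r')$ over $r'$) to get $g(n)<n^2/4+134n^{7/4}$ for \emph{every} $n$, whence $g(k+1)<k^2/4+500k^{7/4}$ uniformly in $k\ge 1$ with no separate small-$k$ cases. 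Your plan needs to be amended to work from those explicit intermediate inequalities in the source, rather than from the theorem statement or from asymptotics.
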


\begin{proof}
Define $g(n):=\min\{m\,|\,\text{\rm the pagenumber of $K_{m,n}$ is
  $n$}\}$. Enomoto et al.~proved that $g(n) = n^2/4 + O(n^{7/4})$
(\cite[Theorem 5]{Enomoto}). Our aim is simply to get an explicit
estimate of the $O(n^{7/4})$ term (without making any substantial effort to
optimize the coefficient of $n^{7/4}$).

In the proof of~\cite[Theorem 5]{Enomoto}, Enomoto et al.~gave upper
bounds for three quanitites $m_1, m_2, m_3$, and proved that $g(n) \le
m_1 + m_2 + m_3$.  They showed $m_1 \le n^{3/4}(n-r)$ (for certain
$r\le n$), $m_2 \le (n^{1/4}+1)(2n^{1/4} + 2)(n-1)$, and
$m_3 \le (n^{1/4}+1)(n^{1/4}+2)(2n^{1/4}+3)(n-1) + r'(n-r')$ (for
  certain $r'\le r$).

Noting that $r' \le r$, the inequality $m_1 \le n^{3/4} (n-r)$ gives
$m_1 \le n^{3/4}(n-r')$.
Elementary manipulations give $m_2 <  2n(n^{1/4}+1)^2 = 2n(n^{1/2}+2n^{1/4} + 1) \le
2n(4n^{1/2})=8n^{3/2} < 8n^{7/4}$, and $m_3 < n(2n^{1/4} + 3)^3 +
r'(n-r') < n(5n^{1/4})^3 + r'(n-r') = 125n^{7/4} + r'(n-r')$. Thus we
obtain
$g(n) \le m_1 + m_2 + m_3 < 133n^{7/4} + (n^{3/4} + r')(n-r')$. An
elementary calculus argument shows that $(n^{3/4} + r')(n-r')$ is
maximized when $r'=(n-n^{3/4})/2$, in which case
$(n^{3/4} + r')(n-r') = (1/4)(n^{3/4} + n)^2 = (1/4)(n^{3/2} +
2n^{7/4} + n^2) < (1/4)(n^2 + 3n^{7/4})$.

Thus we get
$g(n) < 133n^{7/4} + n^2/4 + (3/4)n^{7/4} < n^2/4 + 134n^{7/4}$, and
so
$g(k+1) < (k+1)^2/4 + 134(k+1)^{7/4} < k^2/4 + k/2 + 1/4+ 134(2k)^{7/4} <
k^2/4 + 136(2k)^{7/4} = k^2/4 + 136\cdot 2^{7/4} \cdot k^{7/4} < k^2/4
+ 500 k^{7/4}$.
This means, from the definition of $g$, that
$K_{k+1, k^2/4 + 500 k^{7/4}}$ cannot be $k$-page embedded.
\end{proof}

\begin{lemma}\label{lem:exon}
For each $k\in\{2,3,4,5,6\}$, and every integer $n$,
\[
\nu_k(K_{k+1,n})
\ge
q \cdot \binom{\frac{n-q}{\ell}+1}{2}
+
(\ell-q)\cdot \binom{\frac{n-q}{\ell}}{2}
,
\]
where $\ell:=\floor{(k+1)^2/4}$ and  $q:= \nmodk{n}{\floor{(k+1)^2/4}}$.
\end{lemma}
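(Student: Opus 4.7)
The plan is to derive this lemma as an immediate corollary of the two preceding propositions, by choosing the correct value of the parameter $s$ in Proposition~\ref{pro:low1}.

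First I would set $s := \ell = \floor{(k+1)^2/4}$. By Proposition~\ref{pro:exon}, for each $k \in \{2,3,4,5,6\}$ the graph $K_{k+1,\ell+1} = K_{k+1,s+1}$ cannot be $k$-page embedded. This is precisely the hypothesis needed to apply Proposition~\ref{pro:low1} with this value of $s$.

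Then I would invoke Proposition~\ref{pro:low1}: for every positive integer $n$, setting $q := n \bmod s = n \bmod \ell$, we get
\[
\nu_k(K_{k+1,n}) \ge q\cdot \binom{\tfrac{n-q}{s}+1}{2} + (s-q)\cdot \binom{\tfrac{n-q}{s}}{2}.
\]
Substituting $s = \ell$ yields exactly the inequality claimed in the lemma. Since the definitions of $\ell$ and $q$ match on the nose, no further manipulation is required.

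The only possible subtlety is a boundary check when $n \le \ell$, but Proposition~\ref{pro:low1} already handles this case (its proof observes that the right-hand side is $0$ when $n \le s$, so the inequality is trivial there); thus nothing extra is needed. Since the argument is a direct substitution of hypotheses into an earlier result, there is no real obstacle — the work has been done upstream in establishing Propositions~\ref{pro:low1} and~\ref{pro:exon}.
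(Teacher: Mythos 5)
Your proof is correct and is exactly the paper's argument: the authors also obtain the lemma by combining Propositions~\ref{pro:low1} and~\ref{pro:exon} with $s=\ell$. Your explicit check of the boundary case $n\le\ell$ is a nice touch but, as you note, already covered by Proposition~\ref{pro:low1}.
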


\begin{proof}
It follows immediately from Propositions~\ref{pro:low1} and~\ref{pro:exon}.
\end{proof}

\begin{lemma}\label{lem:exth}
For all positive integers $k$ and $n$,
\[
\nu_k(K_{k+1,n})
>
2n^2
\biggl(
\frac{1}{k^2 + 2000k^{7/4}}
\biggr)
-n.
\]
\end{lemma}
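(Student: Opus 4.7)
My plan is to combine Proposition~\ref{pro:low1} with the explicit estimate for $g$ extracted in the proof of Proposition~\ref{pro:extw}, via a short algebraic simplification of the Tur\'an-type binomial sum that appears in the former.

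To begin, I would pick a positive integer $s$ such that $K_{k+1,s+1}$ cannot be $k$-page embedded and $s<k^2/4+500k^{7/4}$. The proof of Proposition~\ref{pro:extw} in fact establishes the strict inequality $g(k+1)<k^2/4+500k^{7/4}$, so it is enough to take $s:=g(k+1)-1$; this is a positive integer, since $K_{1,n}$ is a tree and thus $g(n)\ge 2$ for every $n\ge 2$. Applying Proposition~\ref{pro:low1} with this $s$ yields
\[
\nu_k(K_{k+1,n})\;\ge\; q\binom{t+1}{2}+(s-q)\binom{t}{2},
\]
where $q:=n\bmod s$ and $t:=(n-q)/s$.

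Next, I would simplify the right-hand side. Writing $n=ts+q$ and expanding, a routine computation gives
\[
q\binom{t+1}{2}+(s-q)\binom{t}{2}\;=\;\frac{(n-q)(n+q-s)}{2s}\;=\;\frac{n(n-s)+q(s-q)}{2s}.
\]
Since $0\le q<s$, the correction term $q(s-q)$ is nonnegative, so dropping it yields the cleaner lower bound
\[
\nu_k(K_{k+1,n})\;\ge\;\frac{n(n-s)}{2s}\;=\;\frac{n^2}{2s}-\frac{n}{2}.
\]

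Finally, I would substitute $s<k^2/4+500k^{7/4}$, which is equivalent to $1/(2s)>2/(k^2+2000k^{7/4})$, and conclude
\[
\nu_k(K_{k+1,n})\;>\;\frac{2n^2}{k^2+2000k^{7/4}}-\frac{n}{2}\;\ge\;\frac{2n^2}{k^2+2000k^{7/4}}-n,
\]
as required. I do not foresee any real obstacle: the whole argument is elementary once Propositions~\ref{pro:low1} and~\ref{pro:extw} are in place, with the only point requiring mild care being the closed-form identity above for the edge count of the complement of a balanced Tur\'an graph. For $n$ so small that the right-hand side of the claimed inequality is negative, the bound is of course trivially true.
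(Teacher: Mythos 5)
Your argument is correct and follows essentially the same route as the paper's proof: invoke Proposition~\ref{pro:low1} with an $s$ for which $K_{k+1,s+1}$ is not $k$-page embeddable and $s<k^2/4+500k^{7/4}$ (supplied by the explicit estimate $g(k+1)<k^2/4+500k^{7/4}$ from the proof of Proposition~\ref{pro:extw}), and then bound the Tur\'an expression from below by $\frac{n^2}{2s}-n$. Your execution is in fact slightly cleaner than the paper's: the exact identity $q\binom{t+1}{2}+(s-q)\binom{t}{2}=\frac{n(n-s)+q(s-q)}{2s}$ avoids the paper's intermediate step ``$\ge s\binom{(n-q)/s+1}{2}$'', which as written has the inequality in the wrong direction, and your choice $s=g(k+1)-1$ guarantees that $s$ is a positive integer, whereas the paper's $s=k^2/4+500k^{7/4}-1$ need not be.
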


\begin{proof}
By Proposition~\ref{pro:extw} it follows that $K_{k+1,k^2/4 +
  500k^{7/4}}$ cannot be $k$-page embedded. Thus, if we let $s:=k^2/4
+ 500k^{7/4} - 1$ and $q:=\nmodk{n}{s}$, it follows from
Proposition~\ref{pro:low1} that
$
\nu_k(K_{k+1,n}) \ge
q\cdot \binom{\frac{n-q}{s}+1}{2} + (s-q)\cdot \binom{\frac{n-q}{s}}{2}
\ge s\cdot \binom{\frac{n-q}{s}+1}{2} > (n-q)^2/2s$. Thus we have
\begin{equation*}
\nu_k(K_{k+1,n})
>  \frac{(n-q)^2}{2s} > \frac{(n-s)^2}{2s} >  \frac{n^2}{2s}
- n
> 2n^2\biggl(
\frac{1}{k^2 + 2000k^{7/4}}
\biggr)
-n.\qedhere
\end{equation*}
\end{proof}


\section{$k$-page crossing numbers of $K_{k+1,n}$: upper bounds}\label{sec:upper}

In this section we derive an upper bound for the $k$-page
crossing number of $K_{k+1,n}$, which will yield the upper bounds
claimed in both Theorems~\ref{thm:main1} and~\ref{thm:main2}.

To obtain this bound we proceed as
follows. First, we show in Proposition~\ref{pro:upp1} that if for some
$s$ the graph $K_{k+1,s}$ admits a certain kind of
$k$-page embedding (what we call a {\em balanced} embedding), then this
embedding can be used to construct drawings of $\nu_k(K_{k+1,n})$ with
a certain number of crossings. Then we prove, in
Proposition~\ref{pro:ek}, that $K_{k+1,\Auxone}$ admits a balanced
$k$-page embedding for every $k$.  These results are then put together
to obtain the required upper bound, given in Lemma~\ref{lem:corone}.

\subsection{Extending balanced $k$-page embeddings to $k$-page drawings}

We consider $k$-page embeddings of $K_{k+1,s}$, for some integers $k$
and $s$. To help
comprehension, color the
$k+1$ degree-$s$ vertices {\em black}, and the $s$ degree-$(k+1)$
vertices {\em white}.
Given such an embedding, a white vertex $v$, and a page, the {\em load}
of $v$ in this page is the number of edges incident with $v$
that lie on the given page.

The pigeon-hole principle shows that in an $k$-page embedding of
$K_{k+1,s}$, for each white vertex $v$ there must exist a page with
load at least $2$. A $k$-page embedding of $K_{k+1,s}$ is {\em
  balanced} if for each white vertex $v$, there exist $k-1$ pages in
which the load of $v$ is $1$ (and so the load of $v$ in the other page
is necessarily $2$).

\begin{proposition}\label{pro:upp1}
Suppose that $K_{k+1,s}$ admits a balanced $k$-page embedding. Let $n
\ge s$, and define $q:=n$ mod $s$.  Then
\[
\nu_k(K_{k+1,n}) \le
q \cdot \binom{\frac{n-q}{s}+1}{2}
+
(s-q)\cdot \binom{\frac{n-q}{s}}{2}.
\]
\end{proposition}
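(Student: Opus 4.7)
The plan is to construct a $k$-page drawing of $K_{k+1,n}$ by ``blowing up'' the $s$ white vertices of the given balanced embedding $E$ of $K_{k+1,s}$. Write $n = s\lfloor n/s\rfloor + q$ (so that $\lfloor n/s\rfloor = (n-q)/s$) and assign group sizes $a_1,\dots,a_s$ to the $s$ white vertices of $E$ chosen as evenly as possible: $q$ of the $a_i$ equal to $(n-q)/s+1$, and the remaining $s-q$ equal to $(n-q)/s$. Then replace the $i$-th white vertex by a block of $a_i$ new degree-$(k+1)$ vertices placed consecutively on the spine inside an arbitrarily small interval around the original position; for each new copy, draw its $k+1$ edges on the same pages as the original, routed as nearly parallel translates of the original edges.

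First, I would count the crossings created within a single blown-up group of size $a_i$. On every page where the original white vertex had load $0$ or $1$, the copies contribute either no edges, or $a_i$ concurrent edges to a common black vertex that nest without crossing. On the unique page where the original had load $2$, with neighbors $b$ and $b'$, the analysis in the circular model reduces to counting alternating quadruples. For any pair $\{j,\ell\}$ of copies, the cyclic order of the four points $w^{(j)},w^{(\ell)},b,b'$ forces that exactly one of the two disjoint edge-pairs $\{w^{(j)}b,w^{(\ell)}b'\}$ and $\{w^{(j)}b',w^{(\ell)}b\}$ alternates, contributing exactly one crossing. This yields precisely $\binom{a_i}{2}$ crossings per group.

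Next, I would argue that no crossings arise between distinct groups. Because each group is confined to an arbitrarily small interval at its original position, for any four endpoints consisting of one copy from each of two distinct groups together with one black target from each, the cyclic order around the spine coincides with the cyclic order of the corresponding originals in $E$. Hence the alternation pattern of any two edges in the blown-up drawing, one from each group, agrees with that of the corresponding pair of edges in $E$; since $E$ is an embedding, such pairs never alternate. The total crossing count is therefore $\sum_{i=1}^{s}\binom{a_i}{2}$, which with the chosen sizes equals
\[
q\binom{\tfrac{n-q}{s}+1}{2}+(s-q)\binom{\tfrac{n-q}{s}}{2}.
\]

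The main obstacle is the inter-group step: one has to verify rigorously that placing each group inside a sufficiently small interval, and routing the copies' edges as parallel translates of the originals on each page, really does preserve the full cyclic/alternation structure across all pages at once, so that no spurious crossings are introduced by the geometric perturbation. Once this is made precise, the equality with the stated right-hand side (and hence the desired upper bound on $\nu_k(K_{k+1,n})$) follows from the fact that, for fixed total $n$, the sum $\sum_i \binom{a_i}{2}$ is minimized by making the $a_i$ as equal as possible, which is exactly how we chose them.
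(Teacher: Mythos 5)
Your proposal is correct and follows essentially the same route as the paper's proof: blow up each white vertex of the balanced embedding into a cluster of size $(n-q)/s$ or $(n-q)/s+1$ placed in a small neighborhood, observe that each cluster contributes $\binom{t}{2}$ crossings on the unique page where the original vertex has load $2$ and none elsewhere, and check that the preserved cyclic order rules out inter-cluster crossings. The only difference is that you spell out the alternation count and the inter-group step in more detail than the paper (which dismisses them as immediate), and your closing remark about minimizing $\sum_i\binom{a_i}{2}$ is unnecessary, since exhibiting this one drawing already gives the upper bound.
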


\begin{proof}
Let $\Psi$ be a balanced $k$-page embedding of $K_{k+1,s}$, presented
in the circular model.
To
construct from $\Psi$ a $k$-page drawing of $K_{k+1,n}$, we first
``blow up'' each white point as follows.

Let $t\ge 1$ be an integer. Consider a white point $r$ in the circle,
and let $N_r$ be a small neighborhood of $r$, such that no point (black or
white) other than $r$ is in $N_r$. Now place $t-1$ additional white points
on the circle, all contained in $N_r$, and let each new white point be
joined to a black point $b$ (in a given page) if and only if $r$ is
joined to $b$ in that page. We say that the white point $r$ has been
{\em converted into a $t$-cluster}.

To construct a $k$-page drawing of $K_{k+1,n}$, we start by choosing
(any) $q$ white points, and then convert each of these $q$ white points
into an $((n-q)/s +1)$-cluster. Finally, convert each of
the remaining $s-q$ white points into an $((n-q)/s)$-cluster. The result
is evidently an $k$-page drawing $D$ of $K_{k+1,n}$.

We finally count the number of crossings in $D$. Consider the
$t$-cluster $C_r$ obtained from some white point $r$ (thus, $t$ is either
$(n-q)/s$ or $(n-q)/s+1$), and consider any page $\pi_i$. It is clear that
if the load of $r$ in $\pi_i$ is $1$, then no edge incident with a
vertex in $C_r$ is crossed in $\pi_i$. On the other hand, if the load
of $r$ in $\pi_i$ is $2$, then it is immediately checked that the
number of crossings involving edges incident with vertices in $C_r$ is
exactly $\binom{t}{2}$. Now the load of $r$ is $2$ in exactly one
page (since $\Psi$ is balanced), and so it follows that the total
number of crossings in $D$ involving edges incident with vertices in
$C_r$ is $\binom{t}{2}$. Since to obtain $D$, $q$ white points were
converted into $((n-q)/s +1)$-clusters, and
$s-q$ white points were converted into $((n-q)/s)$-clusters, it follows
that the number of crossings in $D$ is exactly
\[
q\cdot \binom{\frac{n-q}{s}+1}{2} + (s-q)\cdot \binom{\frac{n-q}{s}}{2}.\qedhere
\]
\end{proof}

\subsection{Constructing balanced $k$-page embeddings}

Enomoto, Nakamigawa, and Ota~\cite{Enomoto} gave a clever general construction
to embed $K_{m,n}$ in $s$ pages for (infinitely) many values of $m,n$, and $s$. In
particular, their construction yields $k$-page embeddings of
$K_{k+1,\Auxone}$. However, the embeddings obtained from their technique are not
balanced (see Figure~\ref{fig:notbal}). We have adapted their construction
to establish the following.

\begin{figure}[h!]
\begin{center}
\scalebox{0.36}{\input{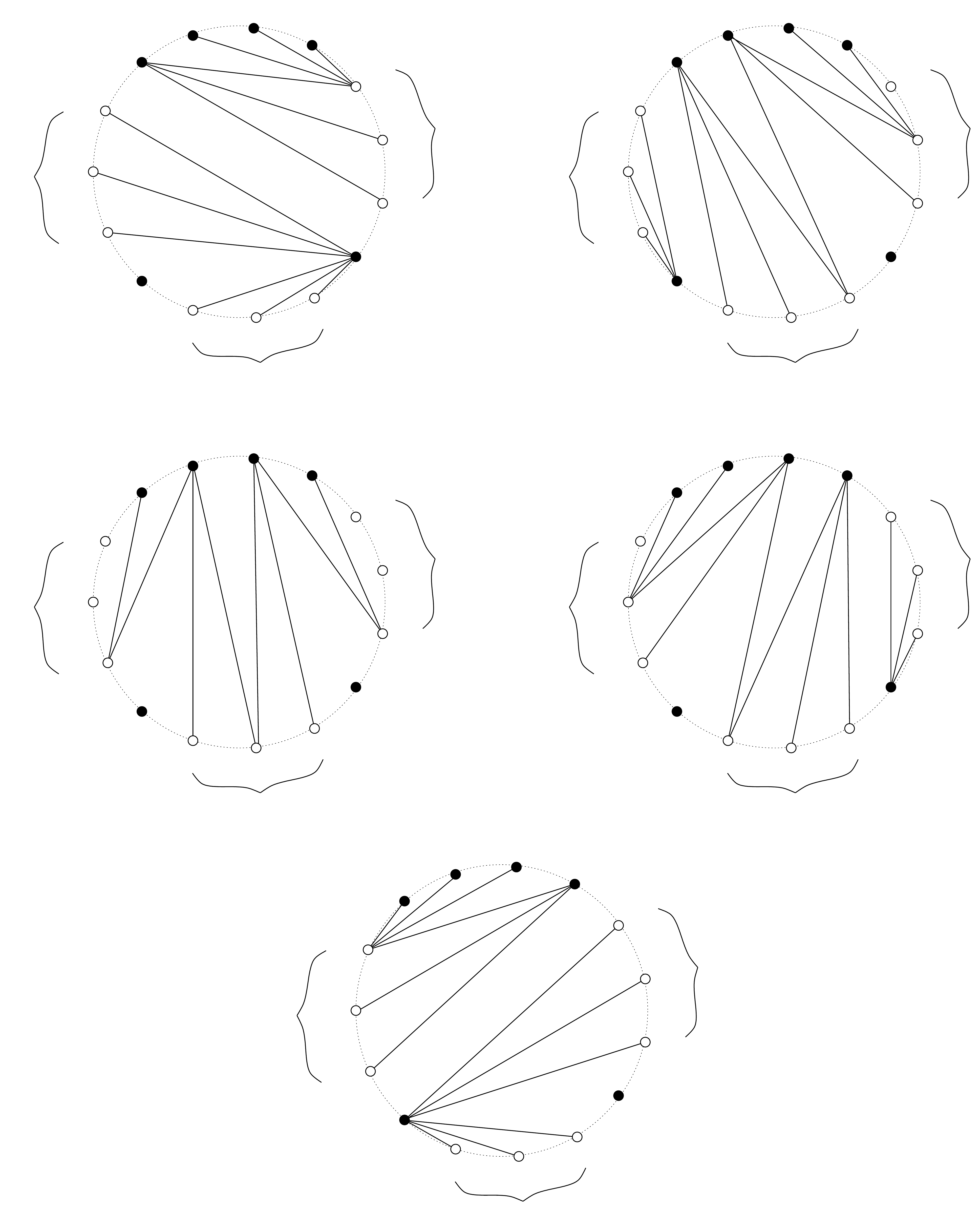_t}}
\end{center}
\caption{\small The $5$-page embedding of $K_{6,9}$
  obtained from the general construction of Enomoto, Nakamigawa, and
  Ota. This embedding is not balanced (for instance, the white
  vertex $w_0$ has degree $4$ in Page $0$).
  }
\label{fig:notbal}
\end{figure}

\begin{proposition}\label{pro:ek}
For each positive integer $k$, the graph $K_{k+1,\floor{(k+1)^2/4}}$
admits a balanced $k$-page embedding.
\end{proposition}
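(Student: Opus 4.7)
The plan is to construct an explicit balanced $k$-page embedding of $K_{k+1,\lfloor (k+1)^2/4\rfloor}$ in the circular model, modelled on but different from the Enomoto--Nakamigawa--Ota construction. Writing $\ell = \lfloor (k+1)/2\rfloor$ and $\ell' = \lceil (k+1)/2\rceil$ so that $\ell\ell' = \lfloor (k+1)^2/4\rfloor$, I will place the $k+1$ black vertices around the circle as $b_0, b_1, \ldots, b_k$ in cyclic order and distribute the $\ell\ell'$ white vertices into groups $W_0, W_1, \ldots, W_k$ located on the arcs between consecutive black vertices. The exact group sizes (with a mild parity split between $k+1$ even and $k+1$ odd) will be chosen to sum to $\ell\ell'$ and to be as equal as possible, so that the intended symmetry of the construction matches the figure of the Enomoto--Nakamigawa--Ota embedding but redirects the concentrated load.

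For each white vertex $w \in W_i$, the two edges $wb_i$ and $wb_{i+1}$ joining $w$ to its neighbouring black vertices will both be placed on a single \emph{local page} $\pi(w)$, providing the required load-$2$ contribution at $w$; the remaining $k-1$ edges incident with $w$ will be assigned one to each of the other $k-1$ pages via a combinatorial rule depending on $i$, on the position of $w$ within $W_i$, and on the cyclic distance to the target black vertex. Under this scheme the balance property is automatic, since every white vertex has load $2$ exactly on $\pi(w)$ and load $1$ on every other page, so the only thing left to verify is that each page is crossing-free.

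The main obstacle is precisely that crossing-freeness check. I plan to describe the chord set on each page as a union of fan-like substructures anchored at distinct black vertices, and then argue that the routing rule arranges these fans so that any two chords of the same page either share a black endpoint or are nested with respect to the cyclic order of their endpoints, ruling out a proper crossing. I expect this to require a small case analysis on the parity of $k+1$, together with a bookkeeping step verifying that the map $w \mapsto \pi(w)$ distributes the load-$2$ pairs over the $k$ pages in a way compatible with the fan structure on the other $k-1$ pages. Finally, a direct count of edges per page will confirm that each of the $(k+1)\lfloor (k+1)^2/4\rfloor$ edges of $K_{k+1,\lfloor (k+1)^2/4\rfloor}$ has been placed exactly once, completing the construction.
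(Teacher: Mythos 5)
Your proposal is a plan rather than a proof: the entire mathematical content of this proposition lies in writing down an explicit page assignment and verifying that every page is crossing-free, and you do neither. You describe the intended shape of the routing rule (``a combinatorial rule depending on $i$, on the position of $w$ within $W_i$, and on the cyclic distance to the target black vertex'') and the intended shape of the verification (fans that pairwise share a black endpoint or are nested), but you never specify the rule, so there is nothing that can be checked. The paper's proof, by contrast, gives the assignment completely: setting $s=\lfloor(k+1)/2\rfloor$ and $t=\lceil(k+1)/2\rceil$, it lists for each page $r$ exactly which edges it carries, via three explicit families per page (Types I--III for pages $0,\dots,s-1$ and Types IV--VI for pages $s,\dots,s+t-2$), after which crossing-freeness is a finite, mechanical verification and balance follows from the observation that every white vertex has load at least $1$ on every page.

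There is also a structural concern with your layout. You distribute the white vertices into $k+1$ groups, one on each arc between consecutive black vertices, with sizes as equal as possible (about $(k+1)/4$ each). The paper, following Enomoto, Nakamigawa, and Ota, instead keeps the $s+1$ black vertices $b_0,\dots,b_s$ consecutive on the circle with \emph{no} white vertices between them, and places only $t$ white blocks, each of size exactly $s$, on the remaining arcs. That asymmetry is not cosmetic: the consecutive run of black vertices is what makes the fan-like families on each page realizable without crossings, and with your symmetric layout it is not clear that the $k-1$ load-one edges of each white vertex can be routed crossing-free; you would have to prove this from scratch. One local constraint your sketch already glosses over: two white vertices $w,w'$ in the same group $W_i$ cannot be given the same local page $\pi(w)=\pi(w')$, since if $w'$ lies between $b_i$ and $w$ then the chord $w'b_{i+1}$ crosses the chord $wb_i$; so $\pi$ must be injective on each group, which is one more piece of bookkeeping your unstated rule must satisfy. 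As it stands, the proposal identifies the right framework (circular model, one load-$2$ page per white vertex, per-page planarity check) but omits the construction that constitutes the proof.
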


\begin{proof}
We show that for each pair of positive integers $s,t$ such that $t$
is either $s$ or $s+1$, the graph $K_{s+t,st}$
admits a balanced $(s+t-1)$-page embedding.
The proposition then follows: given $k$,
if we set $s:=\floor{(k+1)/2}$ and $t:=\ceil{(k+1)/2}$, then
$t\in\{s,s+1\}$, and clearly $k+1=s+t$ (and so $k=s+t-1$) and $\auxone=st$.

To help comprehension, we color the
$s+t$ degree-$st$ vertices {\em black}, and we color the $st$ degree-$(s+t)$
vertices {\em white}.  We describe the required embedding using the
circular model. Thus, we start with $s+t-1$ pairwise disjoint copies of a
circle; these copies are the pages $0,1,\ldots,s+t-2$. In the boundary
of each copy we place the $s+t + st$ vertices, so that the vertices are
placed in an identical manner in all $s+t-1$ copies. Each edge will be
drawn in the interior of the circle of exactly one page, using the
straight segment joining the corresponding vertices.

\begin{figure}[h!]
\begin{center}
\scalebox{0.5}{\input{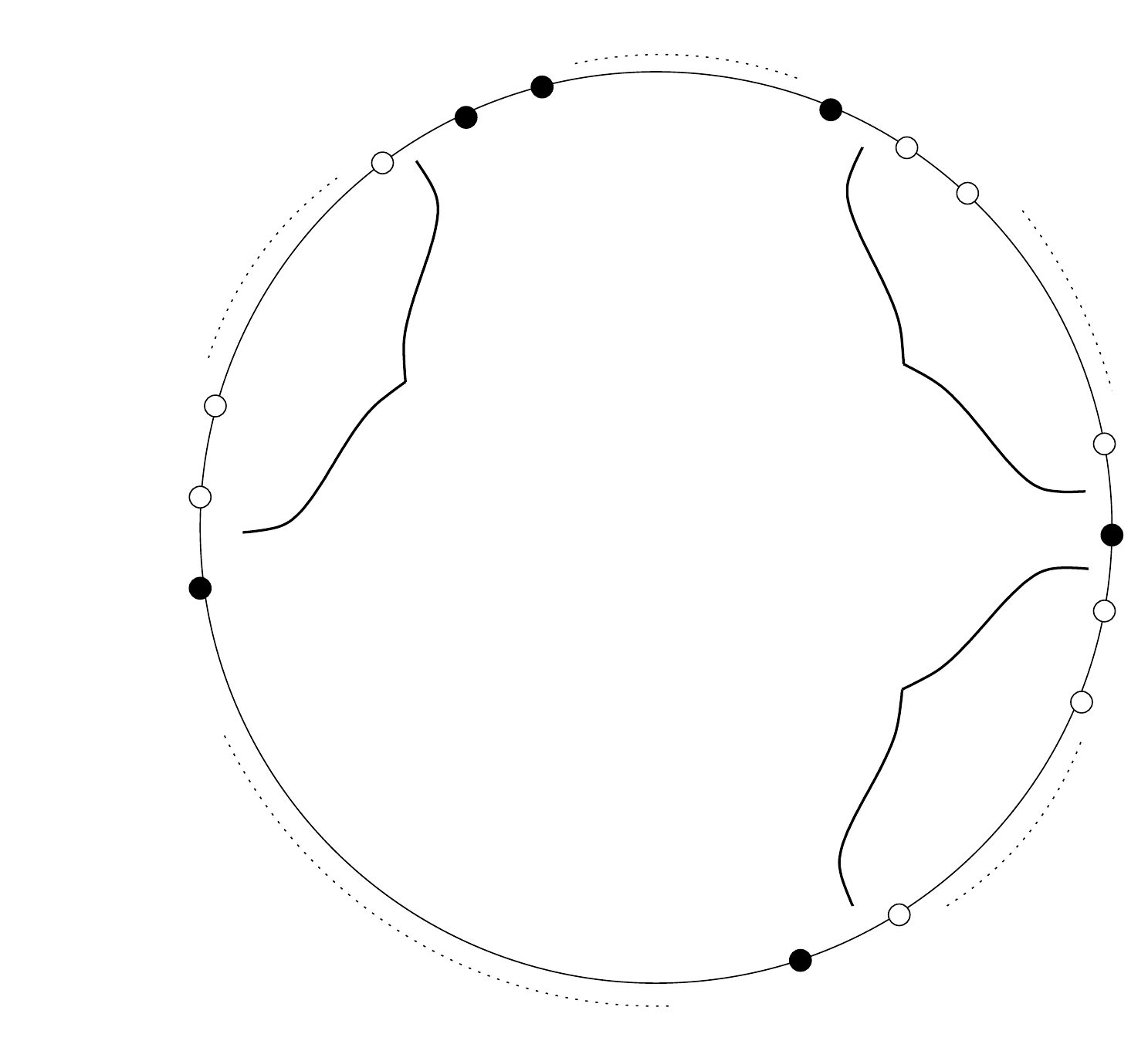_t}}
\caption{
Layout of the vertices of  $K_{s+t,st}$.
}
\label{fig:rough1}
\end{center}
\end{figure}

We now describe how we arrange the white and the black points on the
circle boundary. We use the black-and-white arrangement proposed by Enomoto et
al.~\cite{Enomoto}.
We refer the reader to Figure~\ref{fig:rough1}.
First we place the $s+t$ black points $b_0, b_1,
\ldots, b_{s+t-1}$ in the circle boundary, in this clockwise cyclic order.
Now for each $i\in \{ 0,1,\ldots,t-1\}$, we insert between the
vertices $b_{s+i}$ and $b_{s+i+1}$ a collection $w_{is}, w_{is+1},
\ldots, w_{is+s-1}$ of white vertices, also listed in the clockwise
cyclic order in which they appear between $b_{s+i}$ and $b_{s+i+1}$
(operations on the indices of the black vertices are modulo
$s+t$). For any $i,j$ such that $0 \le i \le j < st$, we let
$W[i:j]$ denote the set of white vertices
$\{w_i,w_{i+1},\ldots,w_j\}$.
For $i=\{0,1,\ldots,t-1\}$,
we call the set $W[is:is+s-1]=\{w_{is},w_{is+1},\ldots,w_{is+s-1}\}$
a {\em white block}, and denote it by $W_i$.
Thus the whole collection of white vertices $w_0, w_1,
\ldots, w_{st-1}$ is partitioned into $t$ blocks $W_0, W_1, \ldots,
W_{t-1}$, each of size $s$.
Note that the black vertices $b_0, b_1, \ldots, b_s$ occur
consecutively in the circle boundary (that is, no white vertex is
between $b_i$ and $b_{i+1}$, for $i\in \{0,1,\ldots,s-1\}$). On the
other hand, for $i=s+1, s+2, \ldots, s+t-1$, the black vertex $b_i$ occurs
between two white vertices: loosely speaking, $b_i$ is sandwiched
between the white blocks $W_{i-1}$ and $W_i$ (operations on the
indices of the white blocks are modulo $t$).

\begin{figure}[h!]
\scalebox{0.36}{\input{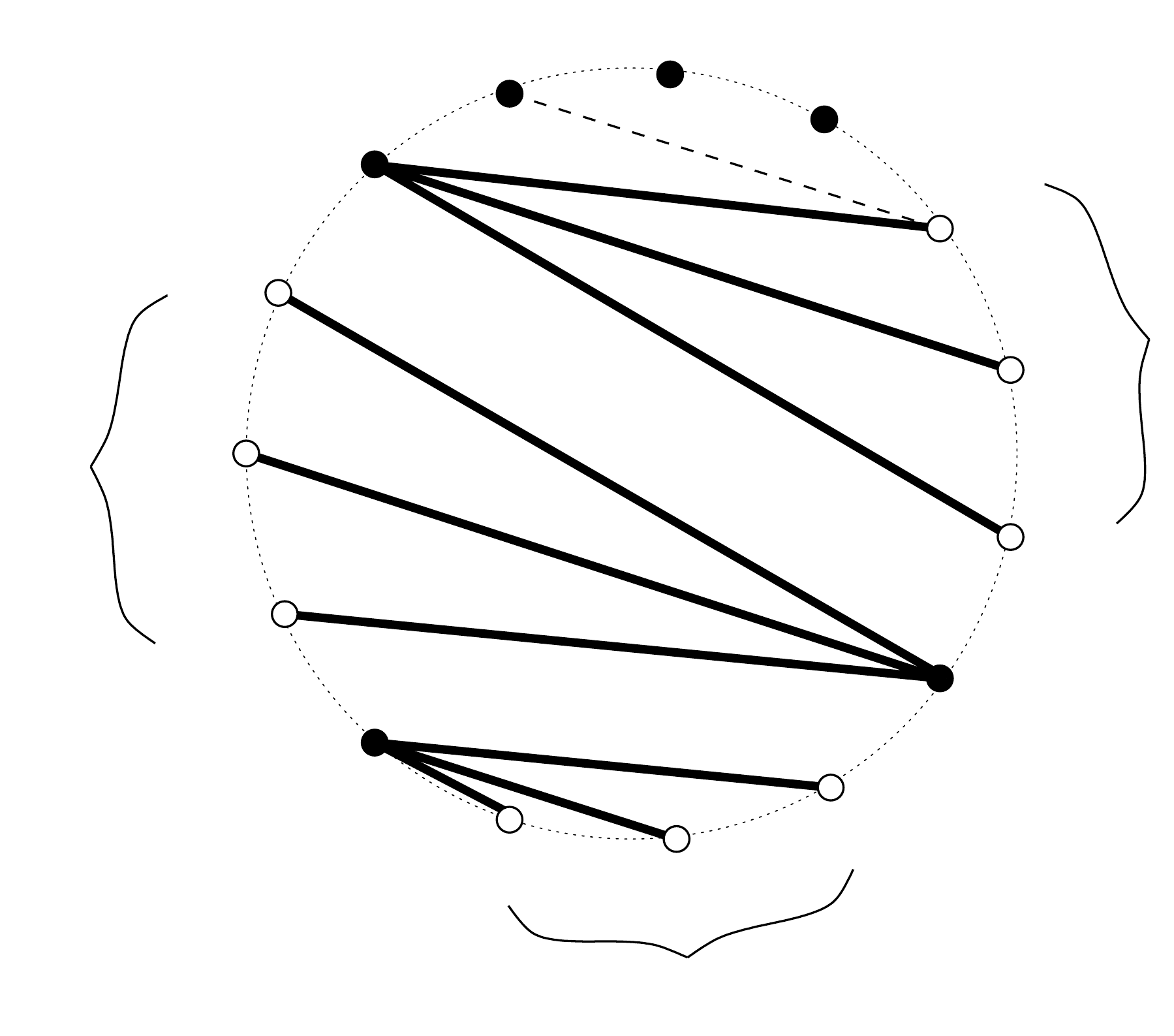_t}}
\hglue 0.4 cm\scalebox{0.36}{\input{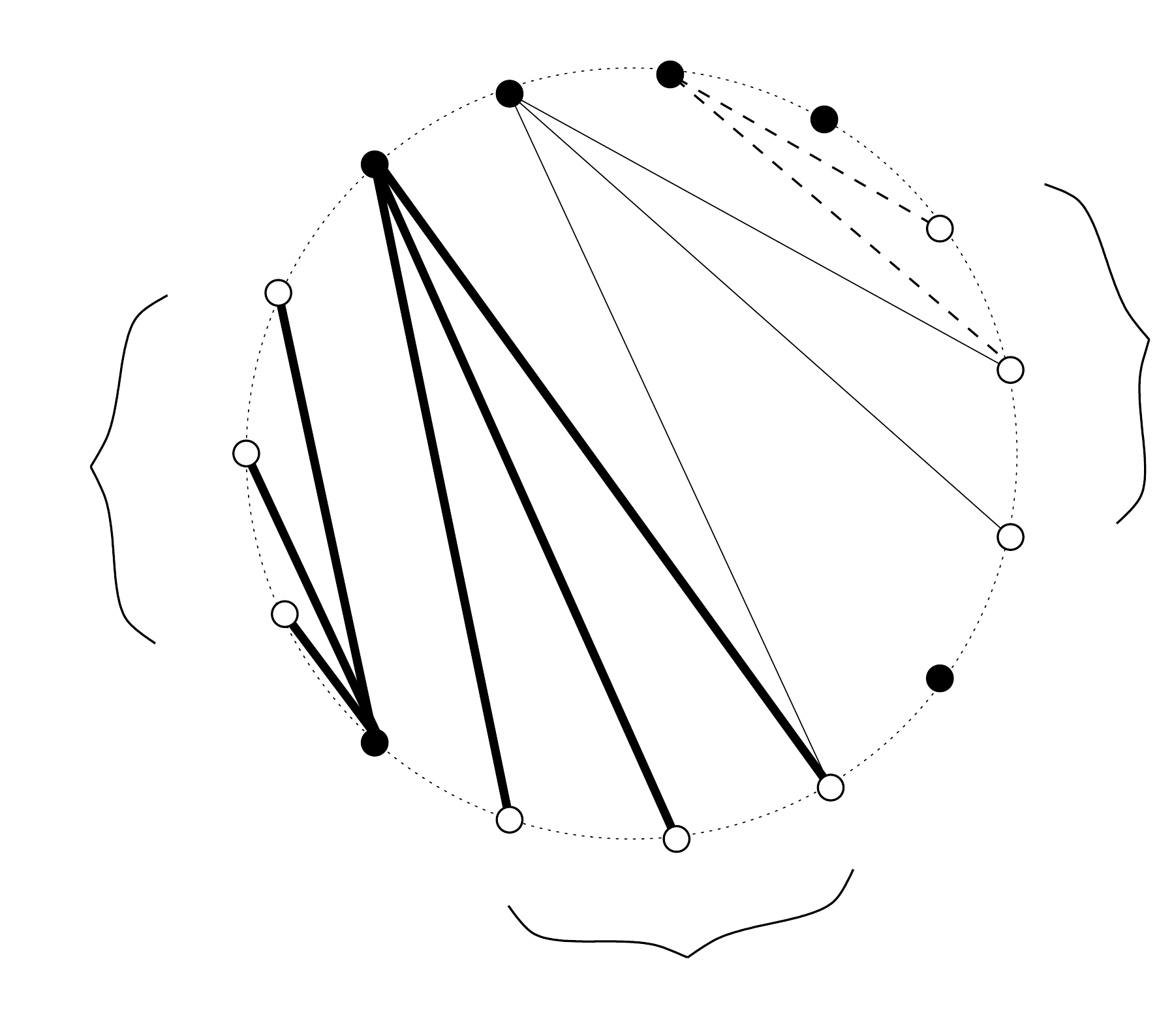_t}}

\scalebox{0.36}{\input{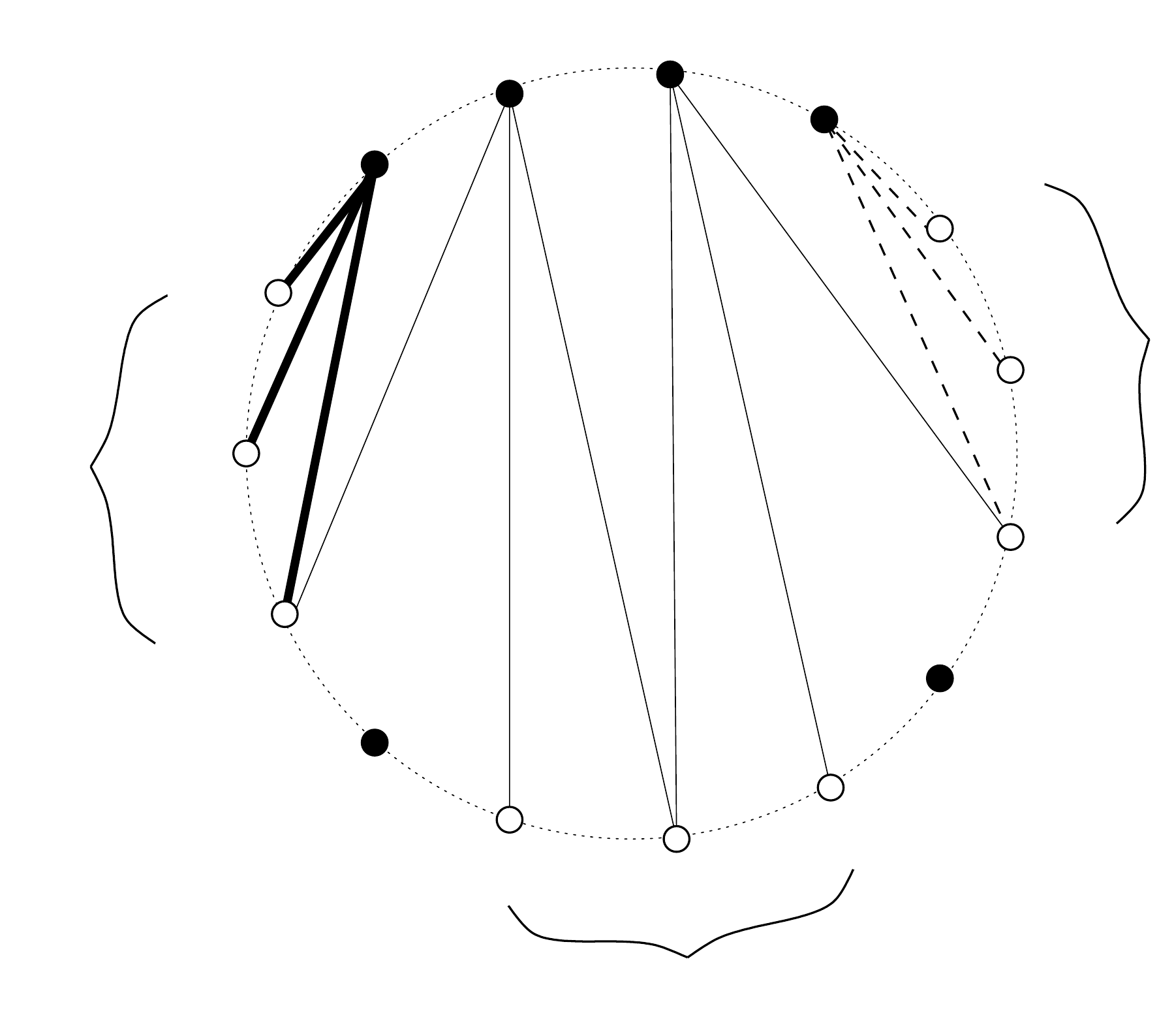_t}}
\hglue 0.4 cm \scalebox{0.36}{\input{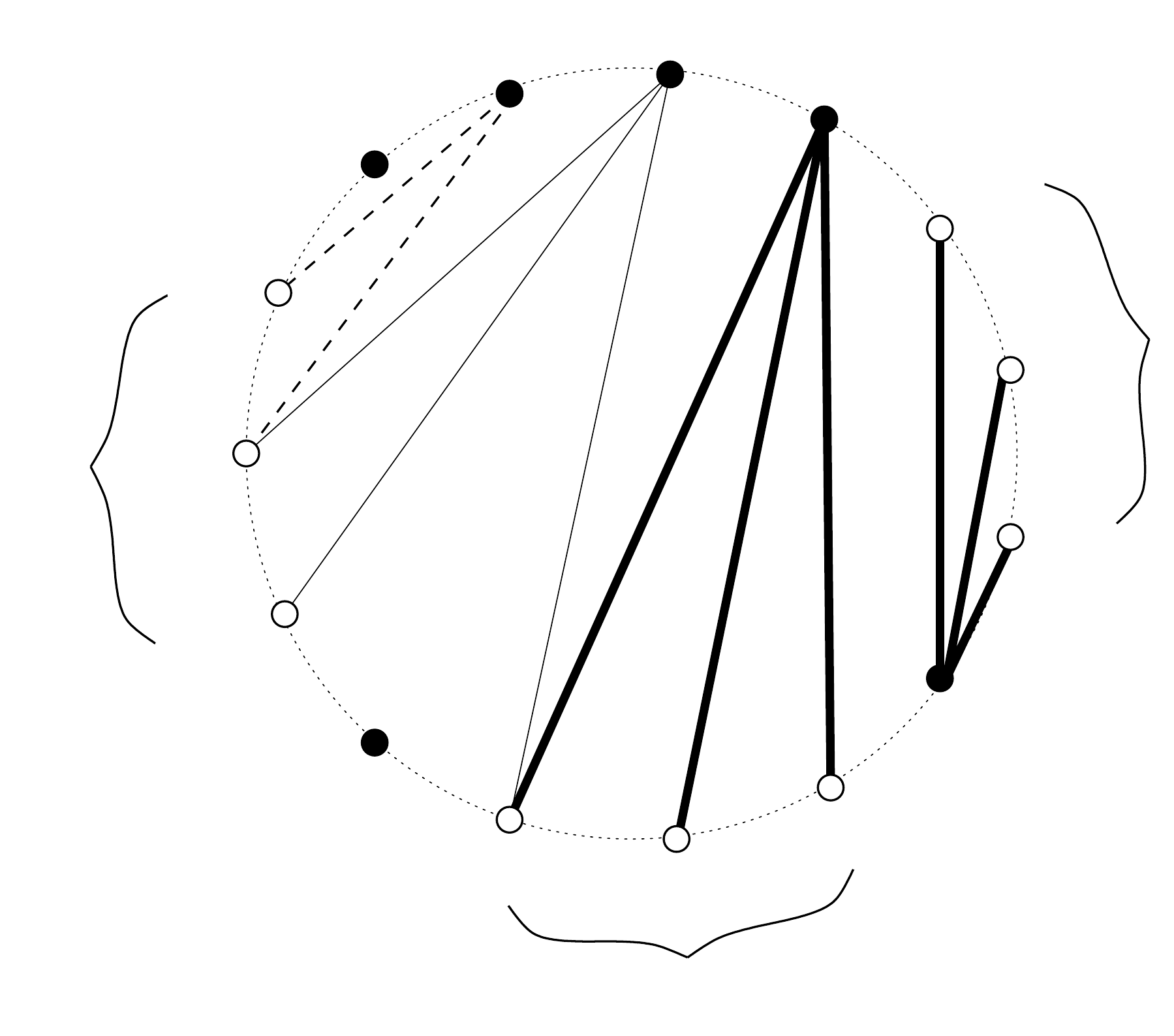_t}}
\vglue -0.5 cm
\vglue -0.5 cm
\hglue -1.2 cm \begin{center}\scalebox{0.36}{\input{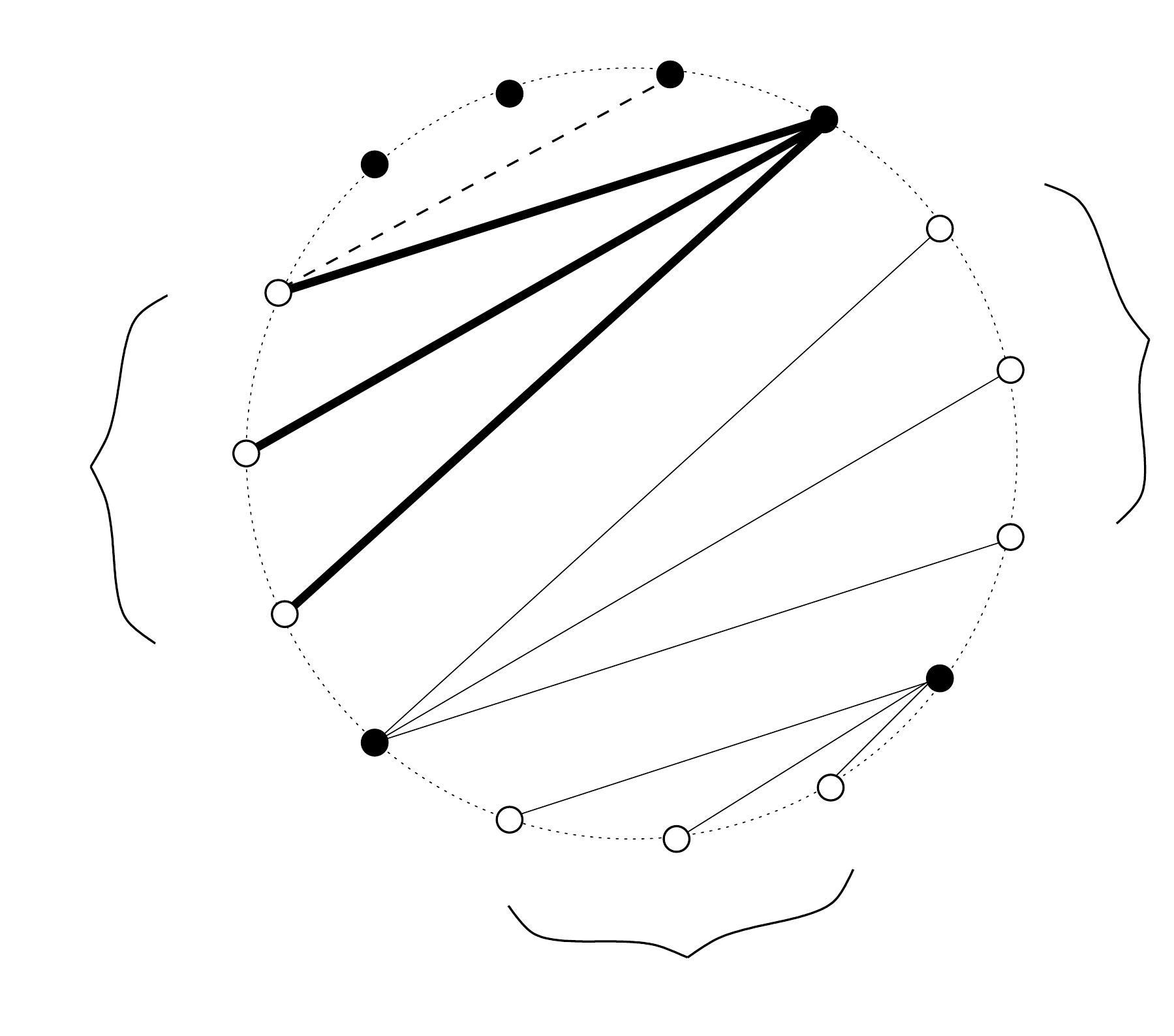_t}}\end{center}
\caption{\small A balanced $5$-page embedding of $K_{6,9}$. In this
  case $k=5$, and so $s=t=3$. Pages $0,1,2,3$ and $4$ are the upper left, upper right,
  middle left, middle right, and lower circle, respectively.
For Pages $0, 1$, and $2$, we
  have edges of Types I, II, and III, whereas for Pages $3$ and $4$,
  we have edges of Types IV, V, and VI. Edges of Types I and IV are
  drawn with thick segments; edges of Types II and V are drawn with
  thinner segments; and edges of Types III and VI are drawn with
  dashed segments.
  }
\label{fig:easex01}
\end{figure}

Now we proceed to place the edges on the pages. We refer the reader to
Figures~\ref{fig:easex01} and~\ref{fig:easex02} for illustrations of the edges distributions
for the cases $k=5$ and $6$.
We remark that:
(i) operations on page numbers are modulo $s+t-1$;
(ii) operations on block indices are modulo $t$;
(iii) operations on the indices of black
  vertices are modulo $s+t$; and
(iv) operations on the indices of white vertices are modulo $st$.


For $r=0,1,\ldots,s-1$, place the following edges in page $r$:

\begin{description}

\item{{\sc Type I \hglue 0.4 cm}} For $i = r+1, r+2, \ldots, t$, the edges joining $b_{s+i}$ to
  all the vertices in the white block $W_{t+r-i}$ (note that $b_{s+t}=b_0$).

\item{{\sc Type II \hglue 0.25 cm}} For $0 < i < r+1$, the edges joining $b_i$ to all the vertices
  in $W[ rs-i(s-1):rs-(i-1)(s-1)]$.

\item{{\sc Type III \hglue 0.1 cm}} The edges joining $b_{r+1}$ to all the vertices in $W[ 0 : r]$.

\end{description}

For $r=s,s+1,\ldots,s+t-2$, place the following edges in page $r$:

\begin{description}

\item{{\sc Type IV \hglue 0.1 cm}} For $i=0,1,\ldots,r-s+1$, $b_{s+i}$ to all the
  vertices in the white block $W_{r-s-i+1}$.

\item{{\sc Type V \hglue 0.25 cm}} For $0 < i < s-r+t-1$, the edges joining $b_{s-i}$ to all the
  vertices in $W[(i+r-s+1)s-i:(i+r-s+1)s-i+(s-1)]$.

\item{{\sc Type VI \hglue 0.1 cm}} The edges joining $b_{r-t+1}$ to
  all the vertices in $W[st-t+r-s+1:st-1]$.

\end{description}

\begin{figure}[h!]
\begin{center}
\scalebox{0.38}{\input{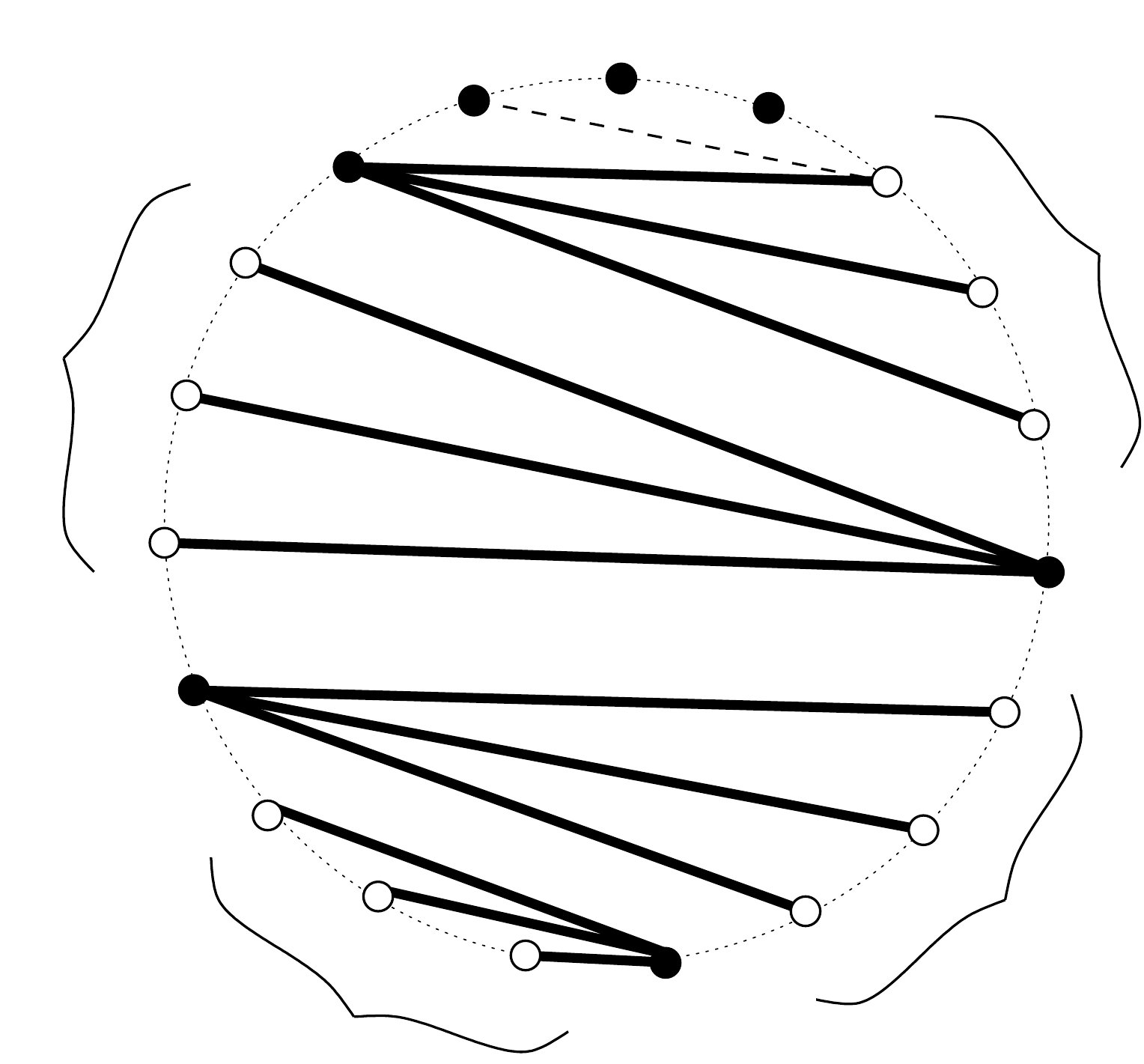_t}}
\hglue 0.3 cm \scalebox{0.38}{\input{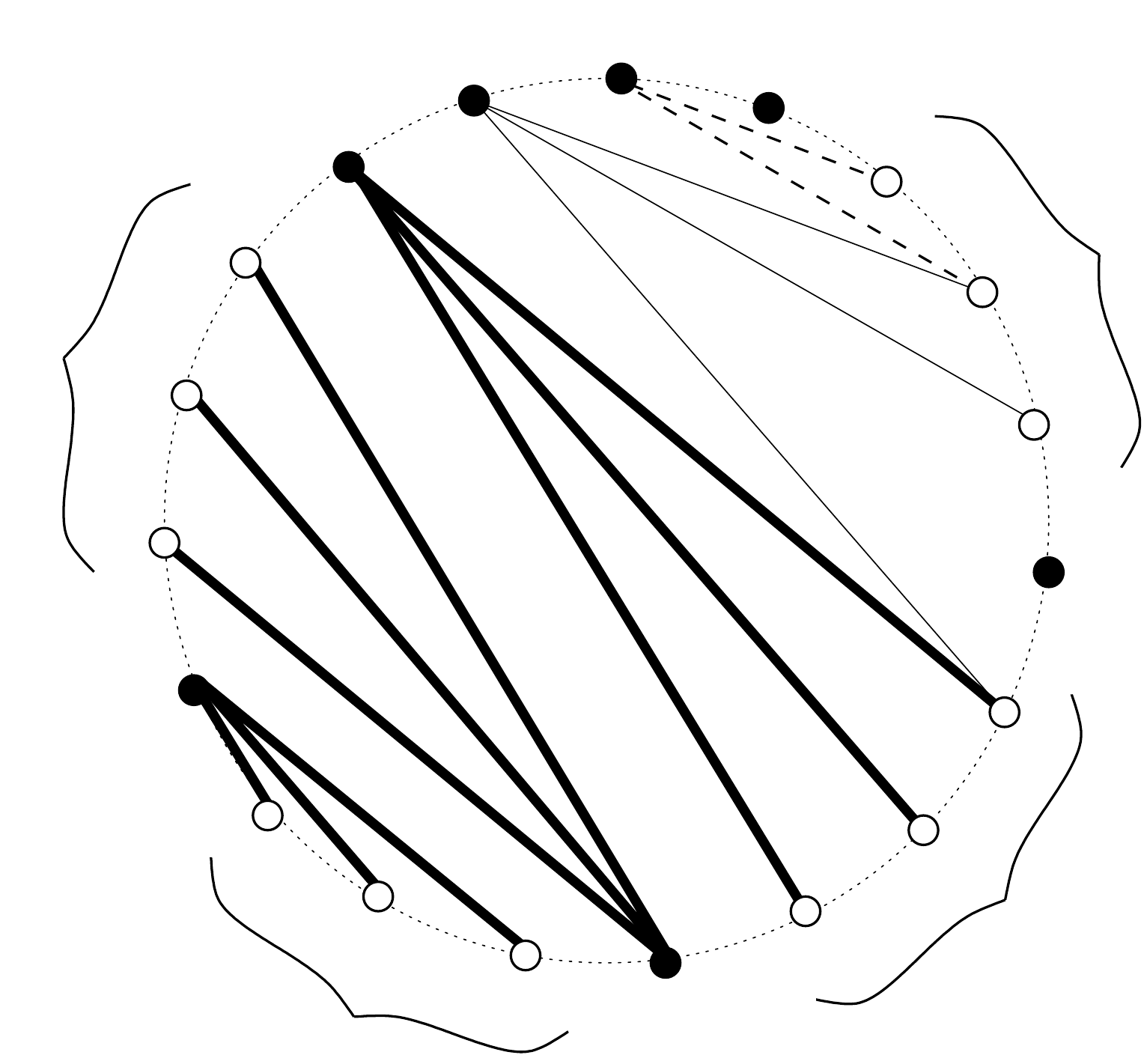_t}}
\vglue 0.3 cm
\scalebox{0.38}{\input{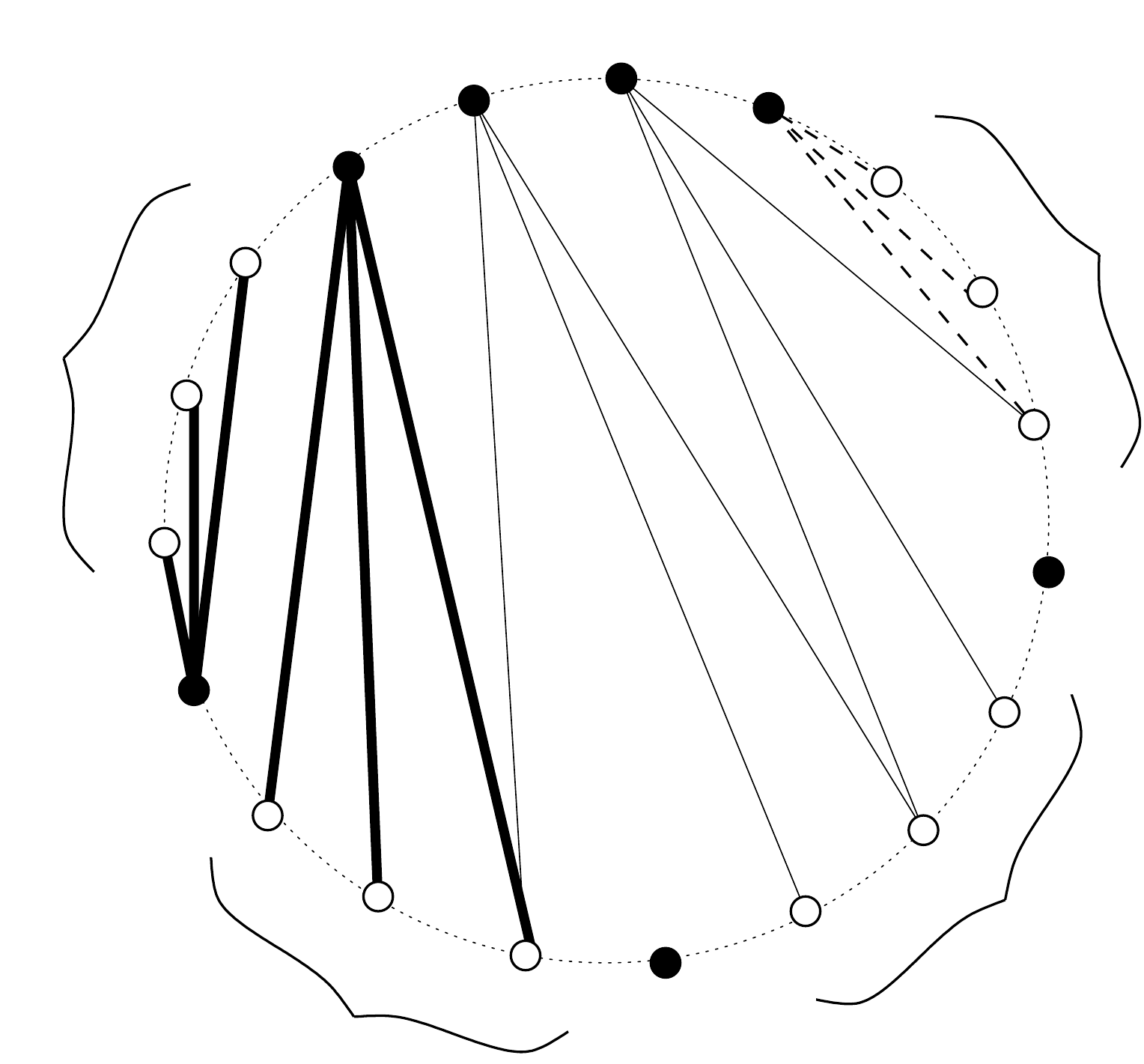_t}}
\hglue 0.3 cm \scalebox{0.38}{\input{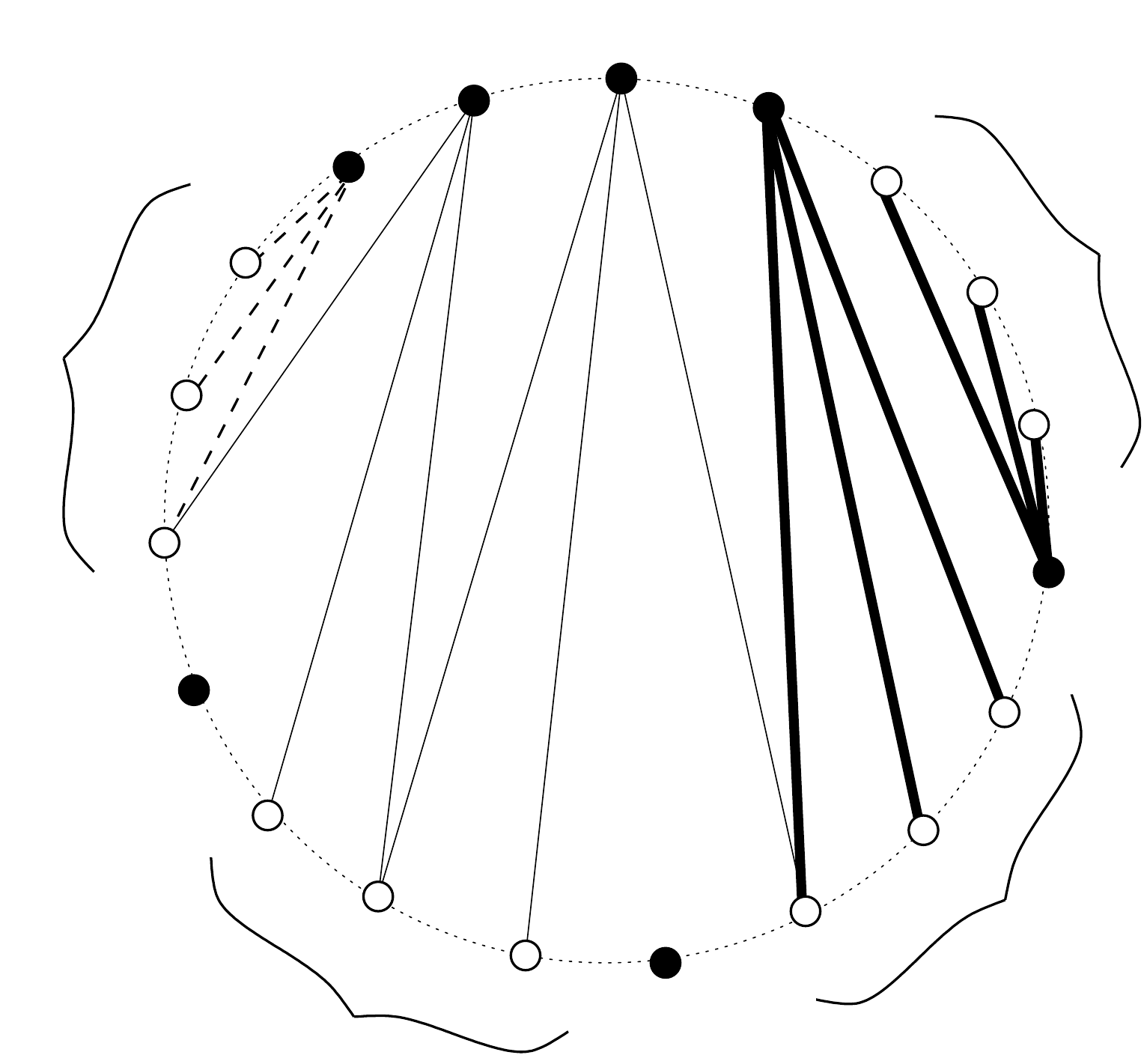_t}}
\vglue 0.3 cm
\scalebox{0.38}{\input{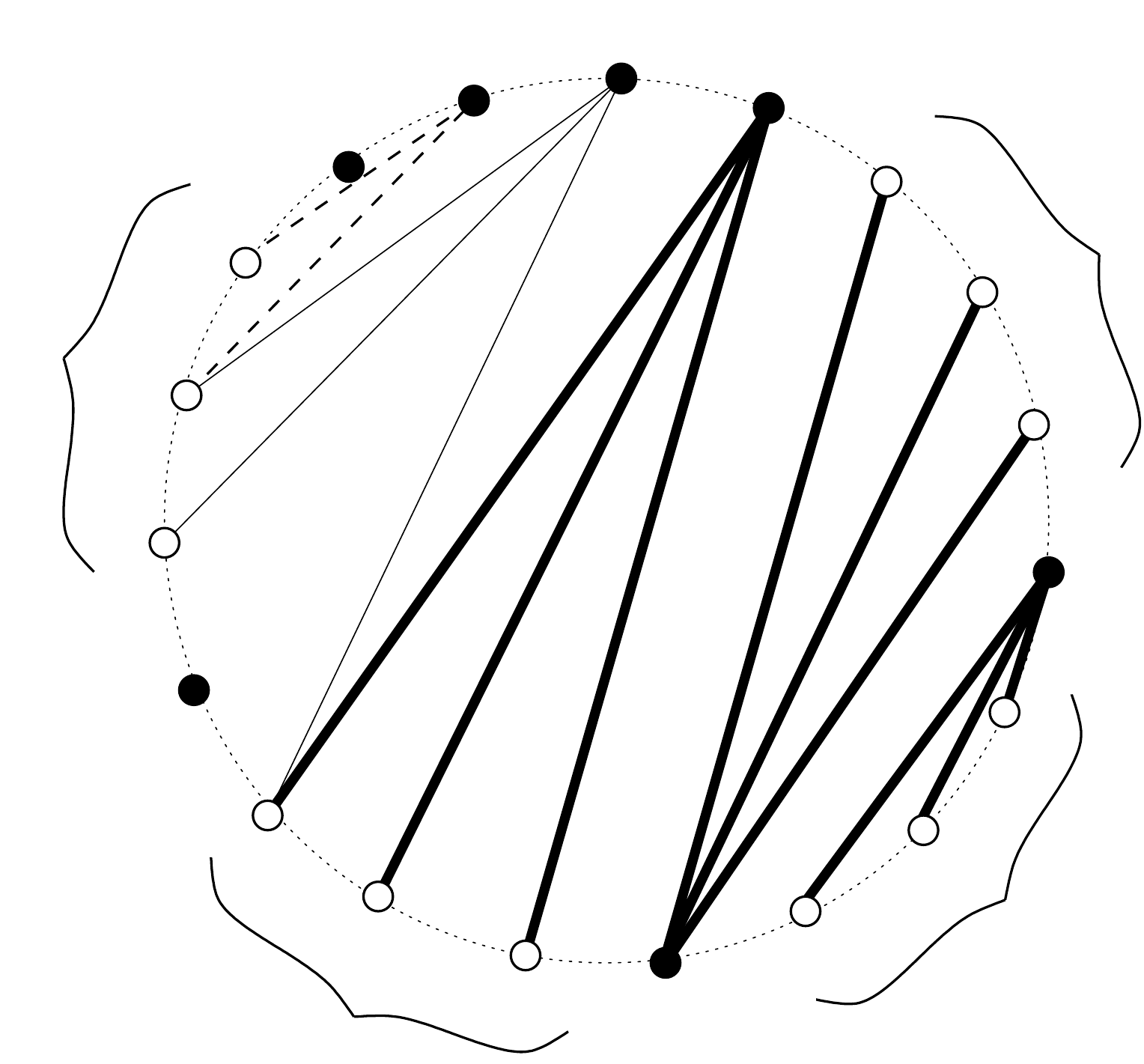_t}}
\hglue 0.3 cm \scalebox{0.38}{\input{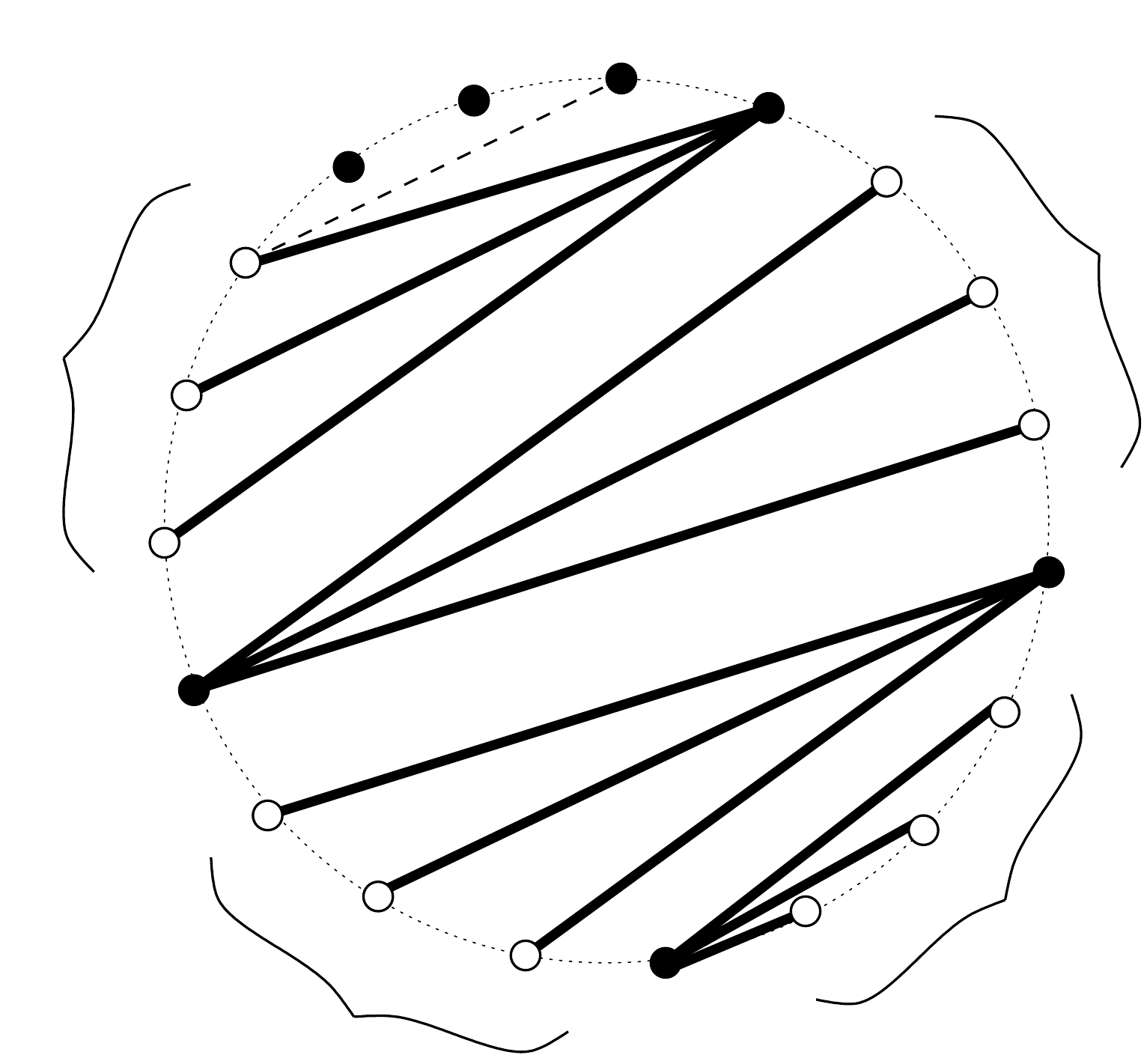_t}}
\end{center}
\caption{\small A balanced $6$-page embedding of $K_{7,12}$. In this
  case $k=6$, and so $s=3$ and $t=4$. Pages $0,1,2,3,4,$ and $5$ are the upper left, upper right,
  middle left, middle right, lower left, and lower right circles, respectively.
For Pages $0, 1$, and $2$, we
have edges of Types I, II, and III, whereas for Pages $3,4$, and $5$,
  we have edges of Types IV, V, and VI. Edges of Types I and IV are
  drawn with thick segments; edges of Types II and V are drawn with
  thinner segments; and edges of Types III and VI are drawn with
  dashed segments.
  }
\label{fig:easex02}
\end{figure}

It is a tedious but  straightforward task to check that this yields an $(s+t-1)$-page
embedding of $K_{s+t,st}$. Moreover, since every white vertex has load
at least $1$ in every page, it follows immediately that the embedding
is balanced.
\end{proof}

\subsection{The upper bound}

\begin{lemma}\label{lem:corone}
For all positive integers $k$ and $n$,
\[
\nu_k(K_{k+1,n})
\le
q \cdot \binom{\frac{n-q}{\ell}+1}{2}
+
(\ell-q)\cdot \binom{\frac{n-q}{\ell}}{2}
,
\]
where $\ell:=\floor{(k+1)^2/4}$ and  $q:= \nmodk{n}{\floor{(k+1)^2/4}}$.
\end{lemma}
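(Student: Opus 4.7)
The plan is to deduce the lemma directly by chaining together Proposition~\ref{pro:ek} and Proposition~\ref{pro:upp1}, with a small separate check for the trivial range $n < \ell$.

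First I would invoke Proposition~\ref{pro:ek} with $s := \ell = \floor{(k+1)^2/4}$ to obtain a balanced $k$-page embedding of $K_{k+1,\ell}$. Feeding this embedding into Proposition~\ref{pro:upp1} (applied with $s=\ell$) immediately yields, for every $n \ge \ell$,
\[
\nu_k(K_{k+1,n}) \le q\cdot\binom{\tfrac{n-q}{\ell}+1}{2} + (\ell-q)\cdot\binom{\tfrac{n-q}{\ell}}{2},
\]
where $q = \nmodk{n}{\ell}$. This is exactly the inequality claimed.

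For the remaining range $1 \le n < \ell$, I would argue as follows. In this regime, $q = n$ and $(n-q)/\ell = 0$, so $\binom{(n-q)/\ell + 1}{2} = \binom{1}{2} = 0$ and $\binom{(n-q)/\ell}{2} = \binom{0}{2} = 0$; the right-hand side of the asserted inequality is thus $0$. On the other hand, $K_{k+1,n}$ is a subgraph of $K_{k+1,\ell}$, which by Proposition~\ref{pro:ek} admits a (balanced, and in particular) $k$-page embedding, so $\nu_k(K_{k+1,n}) = 0$. Hence the inequality holds with equality in this range.

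Since essentially all the real work was already carried out in Propositions~\ref{pro:upp1} and~\ref{pro:ek}, there is no genuine obstacle here; the only point that deserves a moment's care is matching the convention $\binom{a}{b}=0$ for $a<b$ to the trivial $n<\ell$ case, which I have addressed above.
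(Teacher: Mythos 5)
Your proposal is correct and follows exactly the paper's route: the paper's own proof of Lemma~\ref{lem:corone} is the one-line combination of Propositions~\ref{pro:ek} and~\ref{pro:upp1}. Your additional check of the range $n<\ell$ (where Proposition~\ref{pro:upp1}'s hypothesis $n\ge s$ fails but both sides of the inequality are $0$) is a detail the paper leaves implicit, and it is handled correctly.
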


\begin{proof}
It follows immediately by combining Propositions~\ref{pro:upp1} and~\ref{pro:ek}.
\end{proof}

\section{Proofs of Theorems~\ref{thm:main1} and~\ref{thm:main2}}\label{sec:proofs}

We first observe that Theorem~\ref{thm:main1} follows immediately by
combining Lemmas~\ref{lem:exon} and~\ref{lem:corone}.

Now to prove Theorem~\ref{thm:main2}, we let $\ell:=\floor{(k+1)^2/4}$ and $q:=
\nmodk{n}{\floor{(k+1)^2/4}}$,
and note that it follows from
Lemma~\ref{lem:corone} that
\begin{align*}
\nu_k(K_{k+1,n})
&\le
q \cdot \binom{\frac{n-q}{\ell}+1}{2}
+
(\ell-q)\cdot \binom{\frac{n-q}{\ell}}{2}
\le
\ell\cdot \binom{\frac{n-q}{\ell}+1}{2}
=
\frac{\ell}{2}\cdot \biggl(\frac{n-q}{\ell} + 1 \biggr)
\biggl(\frac{n-q}{\ell}\biggr) \\
&= \frac{n-q}{2}\cdot \biggl(  \frac{n-q}{\ell} + 1
\biggr)
\le \frac{n}{2} \cdot \biggl( \frac{n}{\ell}+1 \biggr)
=
\frac{n^2}{2\ell} + \frac{n}{2}
\le
\frac{n^2}{2(k^2/4)} + \frac{n}{2} = \frac{2n^2}{k^2} + \frac{n}{2}.
\end{align*}
Combining this with Lemma~\ref{lem:exth}, we obtain
\[
2n^2
\biggl(
\frac{1}{k^2 + 2000k^{7/4}}
\biggr)
-n
<  \nu_k(K_{k+1,n}) \le
\frac{2n^2}{k^2} + \frac{n}{2},
\]
proving Theorem~\ref{thm:main2}.

\section{A general upper bound for $\nu_k(K_{m,n})$: proof of Theorem~\ref{thm:upp1}}\label{sec:cons}

We now describe a quite natural construction to draw $K_{m,n}$ in $k$
pages, for every $k\ge 3$. Actually, our construction also works for
the case $k=2$, and for this case the upper bounds obtained coincide with
the best known upper bound for $\nu_2(K_{m,n})$.

\begin{proof}[Proof of Theorem~\ref{thm:upp1}] 
For simplicity, we color the $m$ vertices black, and the $n$ vertices
white. Let $p,q,r,s$ be the nonnegative integers defined by the
conditions $m=kp+r$ and $0 \le r \le k-1$, and $n=kq+s$ and $0 \le s
\le k-1$ (note that the definitions of $r$ and $s$ coincide with those
in the statement of Theorem~\ref{thm:upp1}). Our task is to describe a drawing of $K_{m,n}$ with exactly ${(m-r)(n-s)}(m-k+r)(n-k+s)/(4k^2)$ crossings.

We start our construction by dividing the set of black vertices into
$k$ groups $B_0, B_1, \ldots, B_{k-1}$, so that ${k-r}$ of them
(say the first $k-r$) have size $p$, and the remaining $r$  have size
$p+1$. Then
we divide the set of white vertices into
$k$ groups $W_0, W_1, \ldots, W_{k-1}$, such that ${k-s}$ of them
(say the first $k-s$) have size $q$, and the remaining $s$ have size
$q+1$.

Then (using the circular drawing
model) we place the groups alternately on a circumference, as in $B_0,
W_0, B_1, W_1, \ldots, B_{k-1}, W_{k-1}$. Now for
$i=0,1,2,\ldots,k-1$,
we draw in page $i$ the edges joining all black points in $B_j$ to all white points
in $W_s$ if and only if $j+s = i$ (operations are modulo $k$).

A straightforward calculation shows that the total number of crossings
in this drawing is $(k-r)(k-s){p\choose 2}{q\choose 2}$
$+(k-r)s{p\choose 2}{q+1\choose 2}$
$+r(k-s){p+1\choose 2}{q\choose 2}$
$+rs{p+1\choose 2}{q+1\choose 2}$, and an  elementary manipulation
shows that this equals
${(m-r)(n-s)}(m-k+r)(n-k+s)/(4k^2)$.
Thus $\nu_k(K_{m,n}) \le
{(m-r)(n-s)}(m-k+r)(n-k+s)/(4k^2)$, as claimed.

Finally, note that since obviously $m-r \le m$, $n-s \le n$,
$m-k+r \le m-1$, and $n-k+s \le n-1$, it follows that
$\nu_k(K_{m,n}) \le (1/4k^2)m(m-1)n(n-1)=(1/k^2)\binom{m}{2}\binom{n}{2}$.
\end{proof}

\section{Concluding remarks}\label{sec:conrem}

It seems worth gathering in a single expression the best lower and
upper bounds we now have for $\nu_k(K_{m,n})$. Since $\nu_k(K_{m,n})$
may exhibit an exceptional behaviour for small values of $m$ and $n$,
it makes sense to express the asymptotic forms of these bounds. The
lower bound (coming from \cite[Theorem 5]{sssv07}) is given in
\eqref{eq:lobokp},
whereas the upper bound is from Theorem~\ref{thm:upp1}.
\begin{equation}\label{eq:summar}
\frac{1}{3(3\ceil{\frac{k}{2}}-1)^2}
\le
\lim_{m,n\to\infty} \frac{\nu_k(K_{m,n})}{\binom{m}{2}\binom{n}{2}}
\le
\frac{1}{k^2}.
\end{equation}

\def\kpagebip{{\text{\sc BookBipartite}}}
\def\kpagecom{{\text{\sc BookComplete}}}
\def\multicom{{\text{\sc MultiplanarComplete}}}
\def\multibip{{\text{\sc MultiplanarBipartite}}}

As we have observed (and used) above, $\ucr_{k/2}(K_{m,n}) \le
\nu_k(K_{m,n})$, and it is natural to ask whether $\nu_k(K_{m,n})$
is strictly greater than $\ucr_{k/2}(K_{m,n})$ (we assume $k$ even in
this discussion). At least in principle, there
is much more freedom in $k/2$-planar drawings than in $k$-page
drawings. Thus
remains the question: can this
additional freedom be used to (substantially) save crossings?

With this last question in mind, we now carry over an exercise
which reveals the connections
between the book and multiplanar crossing numbers of
complete and complete bipartite graphs.

It is not difficult to prove that the constants
\begin{align*}
\kpagebip \, &:= \,
\lim_{k\to\infty} \, k^2
\cdot \biggl(\lim_{m,n\to\infty} \frac{\nu_k(K_{m,n})}{\binom{m}{2}\binom{n}{2}}\biggr),
\\
\kpagecom \, &:= \, \lim_{k\to\infty} \, k^2
\cdot \biggl(\lim_{n\to\infty} \frac{\nu_k(K_{n})}{\binom{n}{4}}\biggr),
\\
\multibip \, &:= \, \lim_{k\to\infty} \, k^2
\cdot \biggl(\lim_{m,n\to\infty} \frac{\ucr_k(K_{m,n})}{\binom{m}{2}\binom{n}{2}}\biggr),
\\
\multicom \, &:= \, \lim_{k\to\infty} \, k^2
\cdot \biggl(\lim_{n\to\infty} \frac{\ucr_k(K_{n})}{\binom{n}{4}}\biggr),
\end{align*}
are all well-defined.

In view of \eqref{eq:summar}, we have
\begin{equation}\label{eq:bip3}
\frac{4}{27}
\le
\kpagebip
\le
1.
\end{equation}

Using the best known upper bound for $\ucr_{k}(K_{m,n})$
(from~\cite[Theorem 8]{sssv07}), we obtain
\begin{equation}\label{eq:suppo}
\multibip \le \frac{1}{4}.
\end{equation}

We also invoke the upper bound
$\ucr_k(K_n) \le (1/64)k(n+k^2)^4/(k-1)^3$, which holds whenever $k$
is a power of a prime and $n \ge (k-1)^2$ (see~\cite[Theorem
  7]{sssv07}). This immediately yields
\begin{equation}\label{eq:aaa}
\multicom \le \frac{3}{8}.
\end{equation}

We also note that the observation $\ucr_{k/2}(K_{m,n}) \le \nu_k(K_{m,n})$
immediately implies that
\begin{equation}\label{eq:someq}
\multibip \le \frac{1}{4}\cdot \kpagebip.
\end{equation}

Finally,
applying the Richter-Thomassen counting argument~\cite{rt} for
bounding the crossing number of $K_{2n}$ in terms of the crossing number
of $K_{n,n}$ (their argument applies unmodified to $k$-planar crossing
numbers), we obtain
\begin{equation}\label{eq:whaa}
\multicom \ge \frac{3}{2} \cdot \multibip.
\end{equation}

Suppose that the multiplanar drawings of Shahrokhi et
al.~\cite{sssv07} are asymptoticall optimal. In other words, suppose
that equality holds in \eqref{eq:suppo}. Using \eqref{eq:someq}, we
obtain $\kpagebip \ge 1$, and by \eqref{eq:bip3} then we get
$\kpagebip = 1$. Moreover (again, assuming equality holds in
\eqref{eq:suppo}), using \eqref{eq:whaa}, we obtain $\multicom \ge
3/8$, and so in view of \eqref{eq:aaa} we get $\multicom = 3/8$.
Summarizing:

\begin{observation}
Suppose that the multiplanar drawings of $K_{m,n}$ of Shahrokhi et
al.~\cite{sssv07} are asymptotically optimal, so that
$\multibip = \frac{1}{4}$. Then
\begin{align*}
\kpagebip &= 1,   \text{\hglue 0.3 cm {\rm and }}\\
\text{\hglue 4.8 cm }\multicom &= \frac{3}{8}. \text{\hglue 4.8 cm } \Box
\end{align*}
\end{observation}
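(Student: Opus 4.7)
The proof is a short chain of substitutions into the four displayed inequalities already developed in the section, so the plan is essentially to make explicit the informal derivation given just before the Observation. The key point is that the hypothesis $\multibip = 1/4$ saturates the upper bound \eqref{eq:suppo}, which then forces both remaining inequalities to be tight.

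First I would use \eqref{eq:someq}, namely $\multibip \le \tfrac{1}{4}\kpagebip$. Substituting the assumed value $\multibip = 1/4$ gives $\kpagebip \ge 1$. Combining this with the upper bound $\kpagebip \le 1$ from \eqref{eq:bip3} yields the first claim $\kpagebip = 1$.

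Next I would invoke \eqref{eq:whaa}, which says $\multicom \ge \tfrac{3}{2}\multibip$. Plugging in $\multibip = 1/4$ immediately produces $\multicom \ge 3/8$. Pairing this with the upper bound $\multicom \le 3/8$ from \eqref{eq:aaa} gives the second claim $\multicom = 3/8$, completing the proof.

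There is no genuine obstacle here; the entire content is the bookkeeping of constants in the four inequalities \eqref{eq:bip3}, \eqref{eq:aaa}, \eqref{eq:someq}, \eqref{eq:whaa}. If any part deserves scrutiny, it is simply verifying that the limits defining $\kpagebip$, $\kpagecom$, $\multibip$, and $\multicom$ do exist (so that the claimed inequalities make sense as stated), but this existence is asserted by the paper just before \eqref{eq:bip3} and is used implicitly throughout the discussion.
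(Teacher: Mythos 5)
Your proposal is correct and follows exactly the paper's own argument: the paper likewise deduces $\kpagebip \ge 1$ from \eqref{eq:someq} and combines it with \eqref{eq:bip3}, then deduces $\multicom \ge 3/8$ from \eqref{eq:whaa} and combines it with \eqref{eq:aaa}. No substantive difference.
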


In other words, under this scenario (the multiplanar drawings of
$K_{m,n}$ in~\cite{sssv07} being asymptotically optimal), the additional freedom
of $(k/2)$-planar over $k$-page drawings of $K_{m,n}$ becomes less and less important as
the number of planes and pages grows. In addition, under this scenario
the $k$-planar crossing number of $K_n$ also gets (asymptotically) determined.

\paragraph{Acknowledgements.}
The authors are grateful to Cesar Hernandez-Velez and to Imrich Vrt'o
for helpful comments.

\end{document}